\DeclareMathOperator{\Spec}{Spec}
\DeclareMathOperator{\im}{im}
\DeclareMathOperator{\CH}{CH}
\DeclareMathOperator{\Ch}{Ch}
\newcommand{\character}{\mathcal{L}}
\newcommand{\Fil}{\mathcal{C}}
\newcommand{\Eu}{\mathcal{E}}
\newcommand{\length}{\ell}
\newcommand{\Speck}{k}
\newcommand{\Lc}{\mathcal{L}}
\newcommand{\bb}{\mathbf{b}}
\newcommand{\lc}{\llbracket}
\newcommand{\rc}{\rrbracket}
\newcommand{\mup}{{\mu_p}}
\newcommand{\Zz}{\mathbb{Z}}
\newcommand{\Pp}{\mathbb{P}}
\newcommand{\Laz}{\mathbb{L}}
\newcommand{\Nn}{\mathbb{N}}
\newcommand{\Fd}{\mathbb{F}_2}
\newcommand{\Fp}{\mathbb{F}_p}
\newcommand{\Tan}{T}
\newtheorem{theorem}{Theorem}[section]
\newaliascnt{proposition}{theorem}
\newtheorem{proposition}[proposition]{Proposition}
\newaliascnt{lemma}{theorem}
\newtheorem{lemma}[lemma]{Lemma}
\newaliascnt{corollary}{theorem}
\newtheorem{corollary}[corollary]{Corollary}
\newtheorem*{theorem*}{Theorem}
\newtheorem*{proposition*}{Proposition}
\theoremstyle{definition}
\newaliascnt{remark}{theorem}
\newtheorem{remark}[theorem]{Remark}
\newaliascnt{example}{theorem}
\newtheorem{example}[example]{Example}
\newaliascnt{definition}{theorem}
\newtheorem{definition}[definition]{Definition}
\newtheoremstyle{par}
  {}
  {}
  {}
  {}
  {}
  {.}
  { }
  {}%
\theoremstyle{par}
\newtheorem{para}[theorem]{}
\numberwithin{equation}{theorem}
\newcommand{\rref}[1]{(\ref{#1})}
\newcommand{\dref}[2]{(\ref{#1}.\ref{#2})}
\begin{document}
\begin{abstract}
We obtain lower bounds for the dimension of fixed loci of diagonalizable \(p\)-groups acting on smooth projective varieties.
Those bounds depend on the modulo \(p\) Chern numbers of the ambient variety, and are expressed in a natural way by introducing an appropriate filtration on the ``modulo \(p\) cobordism ring'' (for \(p=2\) this is Thom's unoriented cobordism ring \(MO^*\)).
They are obtained using equivariant localization methods, via the concentration theorem for the Chow ring, and by a technique of ``partition dividing''.
As applications we derive statements in the spirit of Boardman's Five-Halves Theorem for involutions on manifolds.
\end{abstract}

\author{Olivier Haution}
\title{Actions of diagonalizable $p$-groups and Chern numbers modulo $p$}
\address{Dipartimento di Matematica e Applicazioni, Università degli Studi di Milano-Bicocca, via Roberto Cozzi 55, 20125 Milano, Italy}
\email{olivier.haution@unimib.it}

\subjclass[2010]{}

\keywords{}
\date{\today}

\maketitle

\numberwithin{theorem}{section}
\numberwithin{lemma}{section}
\numberwithin{proposition}{section}
\numberwithin{corollary}{section}
\numberwithin{example}{section}
\numberwithin{definition}{section}
\numberwithin{remark}{section}

\section*{Introduction}

Consider a smooth projective \(k\)-variety \(X\) with an action of a diagonalizable group \(G\).
We will assume that the cardinality of the character group \(\widehat{G}\) is a power \(q\) of a prime number \(p\).
Explicitly \(G \simeq \mu_{p^{r_1}} \times \cdots \times \mu_{p^{r_m}}\) for some integers \(r_1,\dots,r_m\), and \(q = p^{r_1} \cdots p^{r_m}\).
When \(k\) is algebraically closed of characteristic not \(p\), the group \(G\) is a constant finite abelian \(p\)-group of cardinality \(q\), which acts on \(X\) by \(k\)-automorphisms.
When  \(k\) has characteristic \(p\), the group \(G\) is infinitesimal, and so the notion of \(G\)-action has a somewhat different flavor and can be interpreted in terms of global derivations (see e.g.\ \cite[(4.5.ii)]{inv} for the case \(G=\mu_2\)).

The fixed point theorem of \cite{fpt} asserts that if a Chern number of \(X\) is prime to \(p\), then the group \(G\) must fix a point on \(X\).
In this paper we refine that statement, building on the fact that the Chern numbers are indexed by partitions.
We obtain a lower bound on the dimension of the fixed locus \(X^G\), which will depend on the exact shape of the partition whose associated Chern number is prime to \(p\).
This bound is zero in the worst case --- for ``long partitions'' --- thus simply recovering the fixed point theorem.
But in general we obtain more than the mere existence of a fixed point.

Our dimensional bounds are reminiscent of Boardman's Five-Halves theorem for involutions in topology \cite[Theorem 1]{Boardman-BAMS}, although it should be noted that no assumption on the shape of the partition was present in Boardman's theorem.
The case of \(\Zz/2\)-actions in algebraic geometry was investigated in \cite{ciequ} (over fields of characteristic different from \(2\)), where in particular an algebraic version of Boardman's theorem was obtained.
This paper arose as an attempt to better understand the situation for actions of abelian \(p\)-groups \(G\neq \Zz/2\), in the algebraic context.\\

In order to state our main result, it will be convenient to introduce an appropriate cobordism ring.
To do so, let us identify two smooth projective \(k\)-varieties if they have the same Chern numbers modulo \(p\), indexed by the partitions.
In this way we obtain an \(\Fp\)-algebra \(\Laz_p\), the ``modulo \(p\) cobordism ring'', in which each smooth projective \(k\)-variety \(X\) has a class \(\lc X \rc\).
This algebra is graded, whereby \(\lc X\rc\) is homogeneous of degree \(-n\) when \(X\) has pure dimension \(n\).
It is known that, independently of the field \(k\), the \(\Fp\)-algebra \(\Laz_p\) is (non-canonically) polynomial on variables \(\ell_i \in \Laz_p^{-i}\), where \(i\) runs over the set
\[
N_p=\{i \in \Nn, \text{ where \(i+1\) is not a power of \(p\)}\}.
\]
Let us define another grading of the ring \(\Laz_p\) by letting each \(\ell_i\) be homogeneous of degree \(\lfloor i/q \rfloor\), and denote by \(\dim_q x\) the degree of an element \(x \in \Laz_p\) with respect to this grading.
Even though the grading depends on the choice of the variables \(\ell_i\), the function \(\dim_q\) does not.
Recalling that \(q\) is the number of characters of \(G\), and that \(X^G\) denotes the fixed locus for the \(G\)-action on \(X\), the main result can now be stated as follows:

\begin{theorem*}
If \(G\) acts on a smooth projective \(k\)-variety \(X\), we have
\[
\dim X^G \geq \dim_q \lc X\rc.
\]
\end{theorem*}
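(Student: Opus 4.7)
My approach would combine an equivariant concentration theorem, which rewrites $\lc X\rc$ in terms of classes supported on $X^G$, with a ``partition dividing'' bookkeeping argument controlling how the filtration $\dim_q$ behaves through that rewriting.

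First, I would work in a suitable mod~$p$ equivariant cobordism functor for smooth projective $G$-varieties; the equivariant coefficients form a ring built from $\Laz_p$ by adjoining parameters $t_\chi$ indexed by the characters $\chi$ of~$G$, modulo relations encoding that each $\chi$ has order dividing $q$ (i.e.\ $[q]_F(t_\chi)=0$). The key input is the concentration theorem: after inverting the equivariant Euler classes of the nontrivial characters, the closed embedding $X^G\hookrightarrow X$ induces an isomorphism in equivariant cobordism. Inverting it and applying pushforward to $\Spec k$ yields a formula
\[
\lc X\rc \;=\; \sum_F \pi_{F*}\Bigl(\lc F\rc\cdot e^G(N_{F/X})^{-1}\Bigr),
\]
in which $F$ ranges over the connected components of $X^G$, with structural morphism $\pi_F$ and normal bundle $N_{F/X}$. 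Since $G$ is diagonalizable and $F$ is pointwise fixed, $N_{F/X}$ splits as $\bigoplus_\chi N_\chi$ over nontrivial characters $\chi$, and the equivariant Euler class factors via Chern roots $r_j$ and the formal group law $+_F$ on $\Laz_p$ as $\prod_\chi\prod_j(r_j +_F t_\chi)$.

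Next, I would expand each $(r +_F t_\chi)^{-1}$ as a series in $r$ and $t_\chi$, push the result down by $\pi_{F*}$, and collect surviving terms into a sum of Chern numbers of $F$ (of total dimension $\leq \dim F$) multiplied by expressions in $\Laz_p$ coming from the formal group. The crucial step is bounding the $\dim_q$-degree of those $\Laz_p$-expressions. Here the partition-dividing technique should take over: modulo $p$ the series $[q]_F(t)$ vanishes to an order controlled by $q$, and its coefficients in $\Laz_p$ have prescribed $\dim_q$-degrees; the exponents of the Chern roots appearing in each surviving monomial form a partition, which I would divide into blocks indexed by the characters $\chi$ in such a way that the ``negative'' $\dim_q$ contribution of each block in the inverse Euler class is absorbed by the ``positive'' contribution of the matching coefficient of $[q]_F$. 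Summing over components gives $\dim_q \lc X\rc\leq \max_F \dim F=\dim X^G$.

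The main obstacle will be exactly this final matching: one must decompose partitions uniformly over all characters of $G$ while respecting the exotic grading $\dim_q$, which differs from the usual cobordism degree by roughly a factor of $q$. A naive degree estimate on $e^G(N_{F/X})^{-1}$ only recovers the non-emptiness of $X^G$ as in \cite{fpt}; squeezing the sharper dimensional bound out of the same localization formula is what partition dividing should achieve, by exploiting how the coefficients of $[q]_F$ modulo~$p$ cut down the $\dim_q$-degree of their contributions.
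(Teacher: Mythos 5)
Your overall strategy --- concentration at the fixed locus plus a combinatorial control of how partitions distribute over the characters of \(G\) --- is indeed the strategy of the paper, but the proposal leaves the two genuinely hard points unresolved, and the specific mechanism you invoke for the second one cannot work. First, the localization formula you write lives in the \emph{localized} equivariant theory, where the Euler classes of the nontrivial characters have been inverted; to extract anything about the nonequivariant class \(\lc X\rc\) you need a ring map from that localized theory back to \(\Fp\) compatible with the degree map. The paper supplies this by working in mod \(p\) equivariant Chow groups and using the evaluation \(\epsilon_1\colon \Ch_{\mup}(X^{\mup})[\Eu_{\mup}^{-1}]\to\Ch(X^{\mup})\) sending \(c_1(\character_1)\mapsto 1\), together with the key observation \rref{p:epsilon_indep} that \(\epsilon_1\) and \(\epsilon_0=\epsilon\) agree on classes of non-positive degree, so that the ordinary degree of a top-codimensional class can be computed after this non-graded specialization. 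Your proposal has no analogue of this step, and it is exactly the step that upgrades the localization identity from a statement in a localized ring to a statement about Chern numbers.

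Second, the ``crucial step'' of bounding the \(\dim_q\)-degree of the surviving contributions is only gestured at, and the structure you propose to exploit --- the coefficients of \([q]_F\) modulo \(p\) --- carries no information: in \(\Laz_p\) the formal multiplication by \(p\) vanishes by definition, so \([q]_F\equiv 0\) identically. What the paper actually uses is the norm polynomial \(\phi(x)=x(x+c_1(\character_1))\cdots(x+c_1(\character_{p-1}))\), which is invariant under \(x\mapsto x+c_1(\character_c)\) and divisible by \(x\); replacing \(x^i\) by \(x^{i-p\lfloor i/p\rfloor}\phi(x)^{\lfloor i/p\rfloor}\) deforms the Conner--Floyd classes \(c_\alpha\) into classes \(f_\alpha\) that still compute \(c_\alpha\) after applying \(\epsilon\), but whose image under \(\epsilon_r\) on the fixed locus lands in \(\Fil^{\pi_{pq}(\alpha),q}\) \emph{uniformly over all characters} carried by the normal bundle (Lemma \rref{lemm:f_alpha_E}); this is the precise content of ``partition dividing.'' Finally, the paper does not treat all of \(G\) at once: it inducts on \(|\widehat{G}|\), peeling off one \(\mup\) at a time and using \(X^G=(X^{\mup})^{G/\mup}\), which is what makes the bookkeeping (\(\pi_{pq'}\) at one stage becoming \(\pi_{q'}\) at the next) close up. Handling the full character group in a single localization, as you propose, would require a substitute for this induction that you have not provided.
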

Thus, while the usual grading of \(\Laz_p\) encodes information on the dimension of the ambient variety, the new grading encodes information on the dimension of the fixed locus.
Note however that the class of the variety \(X\) in \(\Laz_p\) does not depend on the \(G\)-action, thus the  theorem provides restrictions on the fixed locus dimension for all possible \(G\)-actions, and those restrictions come from the nonequivariant geometry of \(X\).
Heuristically, the general pattern is that varieties whose cobordism class is highly nontrivial cannot carry actions with fixed locus of low dimension.

Various concrete applications of the theorem are discussed in \S\ref{sect:app}.
We attempt to control (from below) the ratio \(n/d\), where \(n\) is the dimension of the ambient variety \(X\), and \(d\) the dimension of the fixed locus \(X^G\).
Our lower bound for this ratio is approximately \(1/q\), reaching this value in the most favorable cases, but in general depending on the cobordism class of the ambient variety.
We also find explicit divisibility conditions on the cobordism classes of varieties having very low dimensional fixed locus.

Looking at the special case of \(\mu_2\)-actions, the algebraic version of Boardman's theorem arises from the exceptional fact that \(\dim_2\lc X \rc \geq (2/5) \dim X\) as soon as \(\lc X \rc \neq 0\); this type of unconditional bound is unique to the case \(q=2\).\\

Unsurprisingly, to prove the theorem we use equivariant localization methods, via the concentration theorem from \cite{conc}.
The other main idea is to ``deform'' equivariantly the Conner--Floyd Chern classes (those are the cycle classes whose degrees are the Chern numbers), in such a way that their restrictions to the fixed locus have to be supported in large enough codimension.
We achieve this by ``dividing'' the partitions, spreading them out across all \(q\) characters of \(G\) in order to account for all possible actions on the normal bundle to the fixed locus; this is the origin of the ratio \(1/q\) mentioned above.

A key tool is a certain morphism \(\epsilon_1\) from the \(\mup\)-equivariant Chow group (modulo \(p\)) of a variety with trivial \(\mup\)-action to its nonequivariant Chow group.
It clearly plays a central role, but at the same time remains somewhat mysterious, as we lack a clear geometric interpretation of it (for instance it is \emph{not} a graded ring morphism).\\

\textbf{Relation to \cite{diag_cobord}.}
After the present paper was written, its results were substantially extended in \cite{diag_cobord}, where we determine all the restrictions on the dimension of the fixed locus arising from the Chern numbers of the ambient variety, for actions of diagonalizable \(p\)-groups over fields of characteristic different from \(p\).
The results of \cite{diag_cobord} are expressed in terms of the class of the variety in the Lazard ring, rather than in the ring \(\Laz_p\) considered in the present paper, and thus capture in addition the information carried by Chern numbers modulo higher powers of \(p\).
In particular, when the characteristic of \(k\) differs from \(p\), the bounds obtained here (see (\ref{cor:floor_q}), (\ref{th:deg_q})) are contained in \cite[Corollary~3, Theorem~2]{diag_cobord}.
For the complete determination of the restrictions arising from Chern numbers, we therefore refer the reader to \cite{diag_cobord}.

The present paper is nevertheless retained, as its methods are genuinely different (it uses only the modulo \(p\) equivariant Chow ring, whereas \cite{diag_cobord} rests on an analysis of the equivariant algebraic cobordism ring) and as it covers base fields of characteristic \(p\), where the diagonalizable group \(G\) is infinitesimal, a situation currently out of reach of the methods of \cite{diag_cobord}.

\section{Generalized Conner--Floyd Chern classes}
Let \(k\) be a field.
We will often also denote by \(k\) its spectrum.
By a \(k\)-variety we intend a quasi-projective scheme of finite type over \(k\).\\

In this section we fix a linear algebraic group \(G\) over \(k\), and a commutative ring \(R\).

\begin{para}
For a smooth \(k\)-variety \(X\) with a \(G\)-action, we consider its equivariant Chow ring \(\CH_G(X)\) (see \cite{EG-Equ}), and set
\[
A(X)= \CH_G(X) \otimes_{\Zz} R, \quad \text{ and } \quad A=A(\Speck).
\]
Using the codimension grading of the equivariant Chow ring, we may view \(A(X)\) as a graded ring, whose degree \(n\) component we denote by \(A^n(X)\).
\end{para}

\begin{para}
The equivariant Chow ring verifies the equivariant version of the projective bundle theorem, and admits Chern classes for equivariant vector bundles satisfying the Whitney sum formula.
This implies that one may use the splitting principle for the equivariant Chow ring.
\end{para}

\begin{para}
A \emph{partition} \(\alpha\) is a family \((\alpha_1,\dots,\alpha_m)\), where \(\alpha_i\) are integers satisfying \(\alpha_1 \geq \alpha_2 \geq \cdots \geq \alpha_m \geq 1\).
We include the case of the empty partition \(\alpha= \varnothing\), when \(m=0\).
The integer \(m\) is the \emph{length} of \(\alpha\), denoted by \(\length(\alpha)\).
The \emph{weight} of \(\alpha\) is the integer
\[
|\alpha| = \alpha_1 + \cdots + \alpha_m.
\]
\end{para}

\begin{para}
If \(\alpha=(\alpha_1,\dots,\alpha_m)\) and \(\beta=(\beta_1,\dots,\beta_s)\) are partitions, we denote by \(\alpha \cup \beta\) the partition obtained by reordering the \((m+s)\)-tuple \((\alpha_1,\dots,\alpha_m,\beta_1,\dots,\beta_s)\).
\end{para}

\begin{para}
\label{p:bb}
When \(B\) is a commutative ring, we denote by \(B[[\bb]]\) the ring of power series with coefficients in \(B\) in the variables \(b_i\) for \(i\in \Nn \smallsetminus \{0\}\).
It will be convenient to set \(b_0=1\).
To each partition \(\alpha=(\alpha_1,\dots,\alpha_m)\) corresponds the monomial
\[
b_{\alpha} = b_{\alpha_1} \cdots b_{\alpha_m}.
\]
\end{para}

\begin{para}
When \(X\) is a \(k\)-variety, we denote by \(K_0(X)\) its Grothendieck group of vector bundles, and when \(G\) acts on \(X\), by \(K_0(X;G)\) its Grothendieck group of \(G\)-equivariant vector bundles.
\end{para}

\begin{definition}
\label{def:gen_CF}
Let \(g=(g_i, i\in \Nn)\) be a collection of polynomials in \(A[x]\) such that \(g_0=1\).
In view of the splitting principle, there exists a unique way to construct for each smooth \(k\)-variety \(X\) with a \(G\)-action a map
\[
P_g \colon K_0(X;G) \to A(X)[[\bb]],
\]
satisfying the following conditions:
\begin{enumerate}[(1)]
\item If \(L \to X\) is a \(G\)-equivariant line bundle, we have
\[
P_g(L) = \sum_{i\in\Nn} g_i(c_1(L))b_i \quad \text{(recall that \(b_0=1\))}.
\]

\item If \(f \colon Y \to X\) is a \(G\)-equivariant morphism between smooth \(k\)-varieties, and \(E \in  K_0(X;G)\), then
\[
P_g(f^*E) = f^*(P_g(E)).
\]

\item For any \(E,F \in K_0(X;G)\), we have
\[
P_g(E + F) = P_g(E) \cdot P_g(F).
\]
\end{enumerate}
For a partition \(\alpha\), we let \(g_\alpha(E) \in A(X)\) be the \(b_\alpha\)-coefficient of \(P_g(E)\).
\end{definition}

\begin{example}
\label{ex:CF}
If \(g_i=x^i\) for each \(i \in \Nn\), then \(g_\alpha\) is the Conner--Floyd Chern class \(c_\alpha\) for each partition \(\alpha\).
\end{example}

\begin{para}
\label{p:g_graded}
Let us view the polynomial ring \(A[x]\) as a \(\Zz\)-graded ring using the natural grading on \(A\), and letting \(x\) be homogeneous of degree \(1\).
Recall that \(c_1(L) \in A^1(X)\) when \(L \to X\) is a \(G\)-equivariant line bundle.
It follows from the splitting principle that, if each \(g_i(x)\) is homogeneous of degree \(i\), then for each partition \(\alpha\) and \(E \in K_0(X;G)\) we have \(g_{\alpha}(E) \in A^{|\alpha|}(X)\).
\end{para}

\begin{para}
\label{p:g_alpha_sum}
Observe that, for any partition \(\alpha\) and \(E,F \in K_0(X;G)\)
\[
g_\alpha(E + F) = \sum_{\beta \cup \gamma = \alpha} g_\beta(E) g_\gamma(F).
\]
\end{para}

\begin{para}
\label{p:trivial_summand}
Note that \(P_g(1)=1\), and so from \rref{p:g_alpha_sum} we deduce that, for any partition \(\alpha\) and \(E \in K_0(X;G)\)
\[
g_\alpha(E \oplus 1) = g_\alpha(E).
\]
\end{para}

\begin{para}
\label{p:g_line}
Assume that \(E = L_1 \oplus \dots \oplus L_r\), where \(L_j\) are \(G\)-equivariant line bundles.
Let \(\alpha=(\alpha_1,\dots,\alpha_m)\) be a partition.
Then
\begin{equation}
\label{eq:g_line}
g_\alpha(E) = \sum_{i_1,\dots,i_m} g_{\alpha_1}(c_1(L_{i_1})) \cdots g_{\alpha_m}(c_1(L_{i_m})),
\end{equation}
where \(i_1,\dots,i_m\) run over the \(m\)-tuples of pairwise distinct elements in \(\{1,\dots,r\}\).
\end{para}

\begin{para}
\label{p:g:length_rank}
Let \(E \to X\) be a \(G\)-equivariant vector bundle of rank \(r \in \Nn\), where \(X\) is smooth.
Then it follows from the splitting principle and the formula \eqref{eq:g_line} that \(g_\alpha(E)=0\) whenever \(\length(\alpha) >r\).
\end{para}

\begin{para}
\label{p:g_inverse}
If \(E \in K_0(X;G)\), and \(\alpha \neq \varnothing\), we have \(g_\alpha(E-E)=0\) and so by \rref{p:g_alpha_sum} 
\[
0=\sum_{\beta \cup \gamma = \alpha} g_\beta(E) g_\gamma(-E).
\]
\end{para}

\begin{para}
\label{p:g_(n)}
Let \(n \in \Nn \smallsetminus \{0\}\).
Then it follows from \rref{p:g_alpha_sum} that, for any \(E,F \in K_0(X;G)\) we have
\[
g_{(n)}(E +F)=g_{(n)}(E) + g_{(n)}(F).
\]
Moreover if
\[
g_n(x) = \sum_{i=1}^r \lambda_i x^i \text{ with \(\lambda_1,\dots,\lambda_r \in A\)},
\]
then, for any \(E \in K_0(X;G)\) we have 
\[
g_{(n)}(E) = \sum_{i=1}^r \lambda_i c_{(i)}(E).
\]
\end{para}

\begin{para}
\label{p:ineq_part}
Let \(\alpha=(\alpha_1,\dots,\alpha_m)\) and \(\beta=(\beta_1,\dots,\beta_s)\) be partitions.
We will write \(\alpha \geq \beta\) when \(m\geq s\) and \(\alpha_j \geq \beta_j\) for all \(j=1,\dots,s\).
\end{para}

\begin{lemma}
\label{lemm:g:div}
Assume that for each \(i\in \Nn\), the polynomial \(g_i\) is divisible by \(x^{u_i}\), where \(u_i \in \Nn\).
Let \(E \to X\) be a \(G\)-equivariant vector bundle, where \(X\) is a smooth \(k\)-variety, and let \(\alpha = (\alpha_1,\dots,\alpha_m)\) be a partition.

Then \(g_\alpha(E)\) is an \(A\)-linear combination of classes \(c_\beta(E)\) for \(\beta \geq \tilde{u}\) (in the sense of \rref{p:ineq_part}), where \(\tilde{u}\) is the partition corresponding (upon reordering and removing zeroes) to the \(m\)-tuple \((u_{\alpha_1},\dots,u_{\alpha_m})\).
\end{lemma}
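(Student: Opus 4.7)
The plan is to use the splitting principle (as in \rref{p:g_line}) to reduce to the case where \(E = L_1 \oplus \cdots \oplus L_r\) is a direct sum of \(G\)-equivariant line bundles. Under this reduction, formula \eqref{eq:g_line} gives
\[
g_\alpha(E) = \sum_{(i_1,\dots,i_m)} g_{\alpha_1}(c_1(L_{i_1})) \cdots g_{\alpha_m}(c_1(L_{i_m})),
\]
where the sum runs over \(m\)-tuples of pairwise distinct indices in \(\{1,\dots,r\}\). By hypothesis, for each \(j\) one can write \(g_{\alpha_j}(x) = x^{u_{\alpha_j}}\, h_{\alpha_j}(x)\) with \(h_{\alpha_j} \in A[x]\). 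Expanding each \(h_{\alpha_j}\) as a polynomial and distributing, I obtain \(g_\alpha(E)\) as an \(A\)-linear combination of expressions
\[
S_{(e_1,\dots,e_m)} := \sum_{(i_1,\dots,i_m)} c_1(L_{i_1})^{e_1} \cdots c_1(L_{i_m})^{e_m},
\]
with the same range for \((i_1,\dots,i_m)\), where the exponents satisfy \(e_j \geq u_{\alpha_j}\) for each \(j\).

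Next, I would identify each \(S_{(e_1,\dots,e_m)}\) as an integer multiple of a Conner--Floyd Chern class. If every \(e_j \geq 1\), then Example \ref{ex:CF} combined with \eqref{eq:g_line} gives directly \(S_{(e_1,\dots,e_m)} = c_\gamma(E)\), where \(\gamma\) is the partition obtained from \((e_1,\dots,e_m)\) by reordering. If some \(e_j\) vanish, a straightforward counting argument — factoring out the indices contributing \(c_1(L_{i_j})^0 = 1\) and summing them freely over the remaining free positions — expresses \(S_{(e_1,\dots,e_m)}\) as an integer multiple of \(c_{\gamma}(E)\), where \(\gamma\) is now the partition obtained from \((e_1,\dots,e_m)\) by discarding zeros and reordering.

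It remains to verify that every such \(\gamma\) satisfies \(\gamma \geq \tilde{u}\) in the sense of \rref{p:ineq_part}. This is a purely combinatorial monotonicity statement: if \((e_1,\dots,e_m)\) and \((u_{\alpha_1},\dots,u_{\alpha_m})\) are tuples of nonnegative integers with \(e_j \geq u_{\alpha_j}\), then for every threshold \(t \geq 1\) the number of indices \(j\) with \(e_j \geq t\) is at least the number with \(u_{\alpha_j} \geq t\), whence the \(i\)th largest entry of \((e_j)\) is at least the \(i\)th largest entry of \((u_{\alpha_j})\); in particular the number of positive entries of \((e_j)\) is at least the number of positive entries of \((u_{\alpha_j})\), so \(\length(\gamma) \geq \length(\tilde{u})\) and \(\gamma_i \geq \tilde{u}_i\) for \(i \leq \length(\tilde{u})\).

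The main obstacle I anticipate is bookkeeping rather than ideas: keeping careful track of the combinatorics when some \(u_{\alpha_j}\) (and hence some exponents \(e_j\)) may be zero, so that the "distinct indices" sum must be repackaged into a genuine Conner--Floyd Chern class with the right partition. Once the reindexing is handled, the inequality \(\gamma \geq \tilde{u}\) is immediate from the pointwise inequality \(e_j \geq u_{\alpha_j}\).
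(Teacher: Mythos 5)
Your proof takes essentially the same route as the paper's: both expand each factor \(g_{\alpha_j}(x_{i_j})\) into monomials with exponents bounded below by \(u_{\alpha_j}\), and then recognize the resulting symmetric sums as Conner--Floyd classes \(c_\beta(E)\) with \(\beta\geq\tilde{u}\). The paper organizes this via the orbit-sum basis \(R_\beta\) of \(A[x_1,\dots,x_r]^{S_r}\), whereas you work directly with the sums \(S_{(e_1,\dots,e_m)}\) over distinct index tuples, but the combinatorial content --- including your sorting argument showing that pointwise domination \(e_j\geq u_{\alpha_j}\) implies \(\gamma\geq\tilde{u}\) after reordering and discarding zeroes --- is the same and is correct.

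The one step you omit is the paper's preliminary reduction to the case where \(E\) has constant rank \(r\): one decomposes \(X\) \(G\)-equivariantly into pieces on which \(E\) has constant rank and pads \(E\) with trivial summands, which changes neither \(g_\alpha\) nor \(c_\beta\) by \rref{p:trivial_summand}. Your argument does need this. When some \(e_j=0\), the integer relating \(S_{(e_1,\dots,e_m)}\) to \(c_\gamma(E)\) is \((r-m+z)(r-m+z-1)\cdots(r-m+1)\), where \(z\) is the number of vanishing exponents, and this depends on \(r\). If \(E\) had different ranks on different components of \(X\), these coefficients would differ from component to component, and the result would not be a single \(A\)-linear combination of the \(c_\beta(E)\) with coefficients in \(A=A(\Speck)\) rather than \(A(X)\); this is already visible in degree \(0\), where the only available class is \(c_\varnothing(E)=1\). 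With that reduction inserted at the start, your proof is complete and matches the paper's.
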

\begin{proof}
The \(k\)-variety \(X\) decomposes \(G\)-equivariantly as \(X=X_0 \sqcup \dots \sqcup X_r\), where \(E|_{X_i}\) has constant rank \(i\) for each \(i=0,\dots,r\).
Letting \(E'\to X\) be the \(G\)-equivariant vector bundle such that \(E'|_{X_i} = E|_{X_i} \oplus 1^{\oplus r-i}\) for each \(i\), in view of \rref{p:trivial_summand} we may replace \(E\) with \(E'\), and thus assume that \(E\) has constant rank \(r\).
We consider the action of the symmetric group \(S_r\) on the polynomial ring \(A[x_1,\dots,x_r]\) by permuting the variables \(x_1,\dots,x_r\).
For a partition \(\beta = (\beta_1,\dots,\beta_s)\) with \(s \leq r\), let us denote by \(o(\beta)\) the \(S_r\)-orbit of the monomial \(x_1^{\beta_1}\cdots x_s^{\beta_s}\) and consider the symmetric polynomial
\[
R_\beta = \sum_{M \in o(\beta)} M \in A[x_1,\dots,x_r].
\]
The polynomials \(R_\beta\), where \(\beta\) runs over the partitions such that \(\ell(\beta)\leq r\), form an \(A\)-basis of the ring of symmetric polynomials \(A[x_1,\dots,x_r]^{S_r}\); with respect to that basis the \(R_\beta\)-coefficient of a symmetric polynomial \(P\) is zero unless \(P\) contains as a monomial a nonzero multiple of an element of \(o(\beta)\).

By the splitting principle, we may assume that \(E=L_1\oplus \dots \oplus L_r\), where each \(L_j\) is a \(G\)-equivariant line bundle.
Let us consider the formula \eqref{eq:g_line}, and let \(i_1,\dots,i_m\) be pairwise distinct elements of \(\{1,\dots,r\}\).
In particular we have \(m \leq r\).
The polynomial 
\[
g_{\alpha_1}(x_{i_1}) \cdots g_{\alpha_m}(x_{i_m}) \in A[x_1,\dots,x_r]
\]
is an \(A\)-linear combination of monomials of the form 
\[
x_{i_1}^{v_1}\cdots x_{i_m}^{v_m}, \quad \text{ with \(v_j \geq u_{\alpha_j}\) for all \(j=1,\dots,m\)}.
\]
Such a monomial belongs to the orbit \(o(\beta)\) for a unique partition \(\beta\) such that \(\ell(\beta)\leq r\), namely the partition \(\beta\) corresponding to the tuple \((v_1,\dots,v_m\)) (upon reordering and removing zeroes).
Note that we then have \(\beta \geq \tilde{u}\).
Thus the symmetric polynomial 
\[
\sum_{i_1,\dots,i_m} g_{\alpha_1}(x_{i_1}) \cdots g_{\alpha_m}(x_{i_m}),
\]
where \(i_1,\dots,i_m\) run over the pairwise distinct elements of \(\{1,\dots,r\}\), is an \(A\)-linear combination of the symmetric polynomials \(R_{\beta}\), for \(\beta \geq \tilde{u}\) and \(\ell(\beta) \leq r\).
From the formula \eqref{eq:g_line} we deduce that \(g_\alpha(E)\) is an \(A\)-linear combination of the elements
\begin{equation}
\label{eq:r_beta}
R_\beta(c_1(L_1),\dots,c_r(L_r)), \quad \text{for \(\beta \geq \tilde{u}\) and \(\ell(\beta)\leq r\)}.
\end{equation}
To conclude, observe that \eqref{eq:r_beta} coincides with the Conner--Floyd Chern class \(c_\beta(E)\).
\end{proof}

\section{\texorpdfstring{\(\mu_p\)-actions}{{\textmu}p-actions}}
\begin{definition}
We now fix a prime number \(p\).
For a smooth \(k\)-variety \(X\), we will denote by \(\Ch(X)=\CH(X)/p\) its modulo \(p\) Chow ring.
If a linear algebraic group \(G\) over \(k\) acts on \(X\), we denote by \(\Ch_G(X)=\CH_G(X)\otimes_{\Zz} \Zz/p\) the \(G\)-equivariant modulo \(p\) Chow ring.
For \(i \in \Zz\), the degree \(i\) components will be denoted by \(\Ch^i(X)\) and \(\Ch^i_G(X)\).
\end{definition}

\begin{para}
\label{p:epsilon}
When a linear algebraic group \(G\) acts on a smooth \(k\)-variety \(X\), there is a forgetful morphism of graded rings \(\epsilon \colon \Ch_G(X) \to \Ch(X)\) which commutes with pushforwards and pullbacks.
\end{para}

\begin{para}
\label{p:Ch_trivial_alg}
When \(G\) acts trivially on a smooth \(k\)-variety \(X\), the morphism of graded rings \(\epsilon\) admits a natural section
\[
\Ch(X) \to \Ch_G(X),
\]
and so we will view \(\Ch_G(X)\) as a graded \(\Ch(X)\)-algebra.
\end{para}

\begin{para}
\label{p:Lc}
When \(G\) is a linear algebraic group over \(k\), we denote \(\widehat{G}\) its character group (viewed as an abstract group).
For \(c \in \widehat{G}\), we denote by \(\character_c\) the \(G\)-equivariant line bundle over \(\Spec k\), given by \(\mathbb{A}^1\) with the linear \(G\)-action via \(c\).
Its pullback to any \(k\)-variety with a \(G\)-action will again be denoted by \(\character_c\).
\end{para}

\begin{para}
\label{p:diag_grading}
Let \(C\) be a diagonalizable group of finite type over \(k\), and let \(X\) be a \(k\)-variety with trivial \(C\)-action.
If \(E \to X\) is a \(C\)-equivariant vector bundle, we have a \(C\)-equivariant decomposition
\[
E=\bigoplus_{c \in \widehat{C}} E(c),  
\]
where \(C\) acts via the character \(c\) on the subbundle \(E(c)\).
\end{para}

\begin{para}
\label{p:fixed_diag_0}
In the situation of \rref{p:diag_grading}, we will write \(E^C=E(0)\).
\end{para}

\begin{lemma}
\label{lemm:decomp_diag}
Let \(G\) be a diagonalizable group of finite type over \(k\) acting on a \(k\)-variety \(X\).
Let \(C \subset G\) be a subgroup acting trivially on \(X\).
Let \(E \to X\) be a \(G\)-equivariant vector bundle.
Then there exist \(G/C\)-equivariant vector bundles \(F_c\) for \(c \in \widehat{C}\) such that
\[
E \simeq \bigoplus_{c \in \widehat{C}} F_c \otimes \character_c
\]
as \(C\)-equivariant vector bundles.
\end{lemma}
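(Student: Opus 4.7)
The plan is to combine the weight decomposition from \rref{p:diag_grading} with a twist by line bundles pulled back from $G$ (rather than merely $C$), exploiting the fact that the restriction map $\widehat{G} \to \widehat{C}$ is surjective for diagonalizable groups.

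First I would apply \rref{p:diag_grading} to the $C$-action alone, writing $E = \bigoplus_{c \in \widehat{C}} E(c)$ as $C$-equivariant bundles. The preliminary observation is that each $E(c)$ is actually $G$-stable: since $G$ is diagonalizable it is commutative, so for $g \in G$, $h \in C$ and $v \in E(c)$ one has $h \cdot (g\cdot v) = g \cdot (h \cdot v) = c(h)(g\cdot v)$, placing $g\cdot v$ in $E(c)$. Thus each $E(c)$ is a $G$-equivariant vector bundle that happens to be a $C$-isotypic component of weight $c$.

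Next I would use the fact that $C \hookrightarrow G$ is a closed immersion of diagonalizable groups, which dualizes to a surjection $\widehat{G} \twoheadrightarrow \widehat{C}$. Choose, for each $c \in \widehat{C}$, a lift $\tilde{c} \in \widehat{G}$, and set
\[
F_c = E(c) \otimes \character_{-\tilde{c}}
\]
as a $G$-equivariant vector bundle. The $C$-weight of $F_c$ is $c - \tilde{c}|_C = 0$, so $C$ acts trivially on $F_c$, i.e.\ $F_c$ is genuinely $G/C$-equivariant.

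It remains to check the isomorphism. As a $C$-equivariant line bundle, $\character_{-\tilde{c}}$ has weight $-c$, so $\character_{-\tilde{c}} \otimes \character_c$ is $C$-equivariantly trivial; this gives an isomorphism $F_c \otimes \character_c \simeq E(c)$ of $C$-equivariant vector bundles, and summing over $c \in \widehat{C}$ yields the desired decomposition. The only mildly nontrivial step is verifying that $\widehat{G} \to \widehat{C}$ is surjective and justifying the existence of the lifts $\tilde{c}$; everything else is bookkeeping with characters.
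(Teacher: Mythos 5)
Your proof is correct and follows essentially the same route as the paper: take the $\widehat{C}$-weight decomposition, observe it is $G$-equivariant, lift each $c\in\widehat{C}$ through the surjection $\widehat{G}\to\widehat{C}$, and untwist by the corresponding $\character$. The only difference is that the paper cites \cite[(2.4.5)]{fpt} for the $G$-equivariance of the decomposition, whereas you argue it via commutativity of $G$ acting on vectors $v\in E(c)$ --- which is fine in spirit, but since $G$ may be infinitesimal (e.g.\ $\mu_p$ in characteristic $p$) the pointwise phrasing should be replaced by a functor-of-points or comodule argument.
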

\begin{proof}
By \cite[(2.4.5)]{fpt} the decomposition of \rref{p:diag_grading} is \(G\)-equivariant.
For each \(c \in \widehat{C}\), pick a preimage \(g_c \in \widehat{G}\) of \(c\) under the surjection \(\widehat{G} \to \widehat{C}\).
Then \(C\) acts trivially on \(F_c=E(c) \otimes (\character_{g_c})^\vee\), for each \(c\in \widehat{C}\), and we obtain the required decomposition.
\end{proof}

\begin{para}(See e.g.\ \cite[Theorem~2.10]{Totaro-group_cohomology}.)
\label{p:ch_mup}
When \(\mup\) acts trivially on a smooth \(k\)-variety \(X\), there is an isomorphism of \(\Ch(X)\)-algebras (see \rref{p:Ch_trivial_alg})
\[
\Ch(X)[t] \to \Ch_{\mup}(X), \quad t \mapsto c_1(\character_1).
\]
\end{para}

\begin{definition}
\label{def:epsilon_r}
Assume that \(\mu_p\) acts trivially on a smooth \(k\)-variety \(X\), and let \(r\in \Fp\).
In view of \rref{p:ch_mup} we may define a morphism of \(\Ch(X)\)-algebras
\[
\epsilon_r \colon \Ch_{\mu_p}(X) \to \Ch(X)
\]
by the condition \(c_1(\character_1) \mapsto r\).
Note that this morphism is graded only when \(r = 0\).
When \(r\neq 0\), this extends to a morphism
\[
\epsilon_r \colon \Ch_{\mu_p}(X)[\Eu_{\mu_p}^{-1}] \to \Ch(X),
\]
where \(\Eu_{\mu_p}\) is the multiplicative system generated by the classes \(c_1(\character_c)\) for \(c \in (\Zz/p) \smallsetminus \{0\}\).
(In fact it suffices to invert \(c_1(\character_c)\) for a single \(c\neq 0\).)
\end{definition}

\begin{para}
\label{p:epsilon_retraction}
For each \(r \in \Fp\) the morphism \(\epsilon_r\) is a retraction of the natural morphism \(\Ch(X) \to \Ch_{\mup}(X)\) (see \rref{p:Ch_trivial_alg}).
\end{para}

\begin{para}
\label{p:epsilon_indep}
The morphism \(\Ch(X) \to \Ch_{\mup}(X)\) of \rref{p:Ch_trivial_alg} is graded, and an isomorphism in degrees \(\leq 0\).
It thus follows from \rref{p:epsilon_retraction} that the restriction of \(\epsilon_r\) to \(\Ch_G^i(X)\) does not depend on \(r \in \Fp\) when \(i \leq 0\).
\end{para}

\begin{para}
\label{p:epsilon_epsilon_0}
The morphism \(\epsilon_0\) coincides with the morphism \(\epsilon\) of \rref{p:epsilon}, when \(\mup\) acts trivially on \(X\).
\end{para}

\begin{definition}
\label{def:Fil_G}
When a linear algebraic group \(G\) over \(k\) acts on a smooth \(k\)-variety \(X\), we consider the subring
\[
\Fil_G(X) \subset \Ch(X),
\]
generated by the Chern classes \(c_i(E)\), where \(E \in \im(K_0(X;G) \to K_0(X))\) and \(i \in \Nn\).
\end{definition}

\begin{para}
Let \(X\) be a smooth \(k\)-variety with an action of a linear algebraic group \(G\) over \(k\).
Recall that the \emph{Euler class} of a \(G\)-equivariant vector bundle \(E \to X\) is defined as
\[
e(E) = s^* \circ s_*(1) \in \Ch_G(X),
\]
where \(s\colon X \to E\) is the zero-section.
When \(E\) has constant rank \(n\), we have \(e(E)=c_n(E)\) (this statement follows from its nonequivariant version, see e.g.\ \cite[Example~3.3.2]{Ful-In-98}).
\end{para}

\begin{lemma}
\label{lemm:epsilon_1_Euler}
Let \(X\) be a smooth \(k\)-variety with an action of a diagonalizable group \(G\) of finite type over \(k\).
Assume that \(G\) contains \(\mu_p\) as a subgroup acting trivially on \(X\).
Let \(E\) be a \(G\)-equivariant vector bundle over \(X\) such that \(E^{\mu_p}=0\).
Then the Euler class \(e(E)\) is invertible in \(\Ch_{\mup}(X)[\Eu_{\mu_p}^{-1}]\).
Denoting by \(e(-E)\) its inverse, for any \(r \in \Fp \smallsetminus \{0\}\) we have
\[
\epsilon_r(e(-E)) \in \Fil_{G/\mu_p}(X).
\]
\end{lemma}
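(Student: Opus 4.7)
The plan is to split $E$ via Lemma~\ref{lemm:decomp_diag}, compute the Euler class through the splitting principle, and reduce both invertibility and the $\Fil_{G/\mup}(X)$-membership to the nilpotence of ordinary Chern classes in a finite-dimensional Chow ring.

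First I would apply Lemma~\ref{lemm:decomp_diag} with $C=\mup$ to obtain a $\mup$-equivariant decomposition
\[
E \simeq \bigoplus_{c \in \widehat{\mup}} F_c \otimes \character_c,
\]
where each $F_c$ is $G/\mup$-equivariant. Since $E^{\mup}=0$, we must have $F_0=0$, so only $c \in \Fp \smallsetminus \{0\}$ contribute. Setting $t = c_1(\character_1)$ and $r_c = \rank F_c$, by \rref{p:ch_mup} I work in $\Ch_{\mup}(X) = \Ch(X)[t]$, with $c_1(\character_c)=ct$.

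Next, multiplicativity of the Euler class together with the splitting principle applied to each $F_c$ would give
\[
e(E) = \prod_{c \neq 0} e(F_c \otimes \character_c) = \prod_{c \neq 0}\sum_{j=0}^{r_c} c_j(F_c) (ct)^{r_c-j}.
\]
In the localisation $\Ch_{\mup}(X)[\Eu_{\mup}^{-1}]$ each class $ct$ is a unit, and I factor $(ct)^{r_c}$ out of the $c$-th factor, leaving $1+\sum_{j \geq 1} c_j(F_c)(ct)^{-j}$. Since $c_j(F_c) \in \Ch^{\geq 1}(X)$ is nilpotent (as $\Ch^i(X)=0$ for $i > \dim X$) and $(ct)^{-1}$ is central, the tail is nilpotent, so each factor is a unit; hence $e(E)$ is invertible.

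Finally I apply $\epsilon_r$: it is a $\Ch(X)$-algebra map sending $t \mapsto r$, so $\epsilon_r(ct)=cr \in \Fp \smallsetminus \{0\}$ and $\epsilon_r(c_j(F_c)) = c_j(F_c)$, yielding
\[
\epsilon_r(e(E)) = \prod_{c \neq 0}\sum_{j=0}^{r_c} c_j(F_c)(cr)^{r_c-j}.
\]
The constant term is $\prod_{c \neq 0}(cr)^{r_c} \in \Fp \smallsetminus \{0\}$, and every remaining summand is a product of Chern classes of the $G/\mup$-equivariant bundles $F_c$, which lie in $\Fil_{G/\mup}(X)$ by Definition~\ref{def:Fil_G} and are nilpotent in $\Ch(X)$. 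Thus $\epsilon_r(e(E)) = \lambda(1+\nu)$ with $\lambda \in \Fp$ a unit and $\nu \in \Fil_{G/\mup}(X)$ nilpotent, so its inverse in $\Ch(X)$, namely $\lambda^{-1}\sum_{i\geq 0}(-\nu)^i$, also lies in $\Fil_{G/\mup}(X)$. Since $\epsilon_r$ is a ring morphism on the localisation, this inverse coincides with $\epsilon_r(e(-E))$. The main obstacle I expect is purely book-keeping: ensuring that the $G/\mup$-equivariance of each $F_c$ is preserved through the splitting-principle computation, so that the Chern classes appearing in $\epsilon_r(e(E))$ indeed populate the subring $\Fil_{G/\mup}(X)$.
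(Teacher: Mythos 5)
Your proposal is correct and follows essentially the same route as the paper: decompose $E$ via Lemma~\ref{lemm:decomp_diag}, expand the Euler class of $F_c\otimes\character_c$ by the splitting principle, use nilpotence of the positive-degree Chern classes of the $F_c$ to invert, and observe that the inverse of $\epsilon_r(e(E))$ is a polynomial in Chern classes of $G/\mup$-equivariant bundles. The only cosmetic differences are that the paper reduces to a single summand $F\otimes\character_c$ (and to constant rank, a bookkeeping step you elide) before computing, whereas you carry the full product throughout.
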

\begin{proof}
Since \(e(E' \oplus E'') = e(E') e(E'')\) for all \(\mu_p\)-equivariant vector bundles \(E',E''\) over \(X\), it follows from \rref{lemm:decomp_diag} that we may assume that there exists a \(G/\mup\)-equivariant vector bundle \(F \to X\) such that \(E \simeq F \otimes \character_c\) as \(\mu_p\)-equivariant vector bundles, for some fixed \(c \in (\Zz/p) \smallsetminus \{0\}\).
As the \(k\)-variety \(X\) decomposes \(G\)-equivariantly as a disjoint union of varieties over which the vector bundle \(E\) has constant rank, we may assume that \(F\) has constant rank \(n \in \Nn\).

Using the splitting principle, it is not difficult to see that we have in \(\Ch^n_{\mup}(X)\)
\[
e(E) = c_n(F \otimes \character_c) =  c_1(\character_c)^n + c_1(F) c_1(\character_c)^{n-1} + \dots +c_{n-1}(F) c_1(\character_c) + c_n(F).
\]
Since \(F\) carries the trivial \(\mup\)-action, its Chern classes \(c_1(F),\dots,c_n(F)\) are nilpotent in the ring \(\Ch_{\mup}(X)\) (being the images under the map \rref{p:Ch_trivial_alg} of elements of \(\Ch^i(X)\) with \(i>0\)).
Thus \(e(E)\) becomes invertible in \(\Ch_{\mup}(X)[\Eu_{\mu_p}^{-1}]\).
In addition the inverse of
\[
\epsilon_r(e(E)) =  r^n + c_1(F) r^{n-1} + \dots + c_n(F) \in \Ch(X)
\]
is a polynomial in the Chern classes of \(F\), hence belongs to \(\Fil_{G/\mu_p}(X)\).
\end{proof}

\section{Dividing partitions}
Recall that \(p\) is a prime number.
In this section we fix an integer \(q \in \Nn\smallsetminus \{0\}\).
\begin{definition}
For a partition \(\alpha=(\alpha_1,\dots,\alpha_m)\), let us define the integer
\begin{equation}
\label{eq:def_piq}
\pi_q(\alpha) = \Big\lfloor \frac{\alpha_1}{q} \Big\rfloor +\dots + \Big\lfloor \frac{\alpha_m}{q} \Big\rfloor.
\end{equation}
\end{definition}

\begin{para}
\label{p:pi_weight}
Note that for any partition \(\alpha\) we have 
\[
\pi_q(\alpha) \leq \Big\lfloor \frac{|\alpha|}{q} \Big\rfloor.
\]
\end{para}

\begin{para}
\label{p:piq_geq}
If \(\beta \geq \alpha\) (in the sense of \rref{p:ineq_part}), then \(\pi_q(\beta) \geq \pi_q(\alpha)\).
\end{para}

\begin{para}
\label{p:piq_sum}
For any partitions \(\alpha\) and \(\beta\) we have
\[
\pi_q(\alpha \cup \beta) = \pi_q(\alpha) + \pi_q(\beta).
\]
\end{para}

\begin{definition}
Let \(G\) be a linear algebraic group over \(k\) acting on a smooth \(k\)-variety \(X\).
For each \(m \in \Nn\), we define
\[
\Fil_G^{m,q}(X) \subset \Ch(X)
\]
as the subgroup generated by the elements of the form
\[
c_{\alpha^1}(E_1) \cdots c_{\alpha^s}(E_s)
\]
where \(E_1,\dots,E_s \in \im(K_0(X;G) \to K_0(X))\) with \(\pi_q(\alpha^1) +\cdots + \pi_q(\alpha^s) \geq m\).
\end{definition}

\begin{para}
Note that \(\Fil_G(X)=\Fil_G^{0,q}(X)\) (see \rref{def:Fil_G}), and that for every \(m \in \Nn\), we have 
\[
\Fil_G(X)\cdot \Fil_G^{m,q}(X) \subset \Fil_G^{m,q}(X).
\]
\end{para}

\begin{lemma}
\label{lemm:C_G_vb} Let \(G\) be a linear algebraic group over \(k\) acting on a smooth \(k\)-variety \(X\).
Let \(m \in \Nn\).
Then the subgroup \(\Fil_G^{m,q}(X) \subset \Ch(X)\) is generated by the elements of the form
\[
c_{\alpha^1}(E_1) \cdots c_{\alpha^s}(E_s),
\]
where \(\pi_q(\alpha^1) +\cdots + \pi_q(\alpha^s) \geq m\) and \(E_1,\dots,E_s\) are vector bundles over \(X\) admitting a \(G\)-equivariant structure.
\end{lemma}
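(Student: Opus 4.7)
The inclusion where the $E_j$ range over all of $\im(K_0(X;G) \to K_0(X))$ trivially contains the one where they range over honest $G$-equivariant vector bundles. For the reverse, the plan is to rewrite, for each $E_i = [F_i] - [F_i']$ with $F_i, F_i'$ being $G$-equivariant vector bundles, the class $c_{\alpha^i}(E_i)$ as a $\Zz$-linear combination of products of Chern classes of the honest bundles $F_i$ and $F_i'$ having the same total $\pi_q$-weight $\pi_q(\alpha^i)$. Taking products over $i$ will then yield the lemma.

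The key reduction is an induction on the weight $|\gamma|$ showing that for any vector bundle $F'$ on $X$ the class $c_\gamma(-[F'])$ lies in the subgroup generated by products $\prod_j c_{\epsilon^j}(F')$ with $\sum_j \pi_q(\epsilon^j) = \pi_q(\gamma)$. The base case $\gamma = \varnothing$ is trivial. For the inductive step, I would invoke \rref{p:g_inverse} (applied to the trivial group and to $[F'] \in K_0(X)$) to write
\[
c_\gamma(-[F']) = -\sum_{\substack{\beta \cup \delta = \gamma \\ \beta \neq \varnothing}} c_\beta(F')\,c_\delta(-[F']),
\]
in which every $\delta$ satisfies $|\delta| < |\gamma|$, so the inductive hypothesis applies. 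The additivity $\pi_q(\beta \cup \delta) = \pi_q(\beta) + \pi_q(\delta)$ from \rref{p:piq_sum} ensures that multiplying by $c_\beta(F')$ preserves the required total $\pi_q$-weight.

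Once this is in place, \rref{p:g_alpha_sum} gives
\[
c_{\alpha^i}(E_i) = \sum_{\beta \cup \gamma = \alpha^i} c_\beta(F_i)\,c_\gamma(-[F_i']),
\]
and substituting the inductive expression for $c_\gamma(-[F_i'])$ rewrites $c_{\alpha^i}(E_i)$ as a linear combination of products of Chern classes of $F_i$ and $F_i'$ whose $\pi_q$-weights sum to $\pi_q(\beta) + \pi_q(\gamma) = \pi_q(\alpha^i)$. Multiplying across $i$ then expresses the original generator as an integer linear combination of products $\prod_l c_{\zeta^l}(H_l)$ with $\sum_l \pi_q(\zeta^l) = \sum_i \pi_q(\alpha^i) \geq m$, where each $H_l \in \{F_1, F_1', \dots, F_s, F_s'\}$ carries a $G$-equivariant structure. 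I do not anticipate any substantial difficulty — the whole argument amounts to a careful bookkeeping of $\pi_q$-weights under the recursive definition of Chern classes of virtual bundles.
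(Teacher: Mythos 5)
Your proposal is correct and follows essentially the same route as the paper: isolate the top term in the identity of \rref{p:g_inverse} to handle $c_\gamma(-F')$ by induction (the paper inducts on length rather than weight, and packages the bookkeeping as a multiplicative filtration $(S^n)$, but these are cosmetic differences), then combine with \rref{p:g_alpha_sum} and the additivity \rref{p:piq_sum}. No gaps.
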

\begin{proof}
Consider the ring filtration
\[
\cdots \subset S^{n+1} \subset S^n \subset\cdots \subset S^0 =\Fil_G(X)
\]
generated by the condition that \(c_\alpha(V) \in S^{\pi_q(\alpha)}\) for every partition \(\alpha\) and vector bundle \(V \to X\) admitting a \(G\)-equivariant structure.
Certainly \(S^n \subset \Fil_G^{n,q}(X)\) for every \(n \in \Nn\), and it will suffice to prove the reverse inclusion.

Every element of \(\im(K_0(X;G) \to K_0(X))\) is of the form \(E-F\), where \(E,F\) are vector bundles over \(X\) admitting a \(G\)-equivariant structure.
Fixing such \(E,F\), it will suffice to show that \(c_\alpha(E-F) \in S^{\pi_q(\alpha)}\) for every partition \(\alpha\).
Recall from \rref{p:g_inverse} (and \rref{ex:CF}) that for any partition \(\alpha \neq \varnothing\) we have
\[
c_\alpha(-F) = -\sum_{\substack{\beta \cup \gamma = \alpha \\ \beta \neq \alpha}} c_\beta(-F) c_\gamma(F).
\]
In view of \eqref{p:piq_sum} (and the multiplicativity of the filtration \((S^n)\)), we deduce by induction on the length of \(\alpha\) that \(c_\alpha(-F) \in S^{\pi_q(\alpha)}\) for any partition \(\alpha\) (the case \(\alpha = \varnothing\) being clear).
Next, for any partition \(\alpha\) we have the formula (see \rref{p:g_alpha_sum})
\[
c_\alpha(E-F) = \sum_{\beta \cup \gamma = \alpha} c_\beta(E) c_\gamma(-F),
\]
and again in view of \eqref{p:piq_sum}  (and the multiplicativity of the filtration \((S^n)\)) we deduce that \(c_\alpha(E-F) \in S^{\pi_q(\alpha)}\).
\end{proof}

\begin{para}
\label{p:phi}
Let us consider the polynomial
\begin{equation}
\label{eq:phi}
\phi(x) = x(x+c_1(\Lc_1)) \cdots (x+c_1(\Lc_{p-1})) \in \Ch_{\mup}(\Speck)[x],
\end{equation}
and the family of polynomials \(f=(f_i, i\in \Nn)\) given by
\begin{equation}
\label{eq:f_i}
f_i(x) =x^{i-p\lfloor \frac{i}{p} \rfloor} \phi(x)^{\lfloor \frac{i}{p} \rfloor}\in \Ch_{\mup}(\Speck)[x].
\end{equation}
If \(X\) is a smooth \(k\)-variety with trivial \(\mup\)-action and \(E \in K_0(X;\mup)\), by \rref{def:gen_CF} we thus have for every partition \(\alpha\) a class
\[
f_\alpha(E) \in \Ch_{\mup}(X).
\]
Observe that \(f_i(x)\) is mapped to \(x^i\) under the morphism \(\Ch_{\mup}(\Speck)[x] \to \Ch(\Speck)=\Fp[x]\) induced by the morphism \(\epsilon\) of \rref{p:epsilon}, and so for any partition \(\alpha\)  and any \(E \in K_0(X;\mup)\) we have
\begin{equation}
\label{eq:epsilon_f_c}
\epsilon(f_\alpha(E)) = c_\alpha(E) \in \Ch(X).
\end{equation}
\end{para}

\begin{para}
\label{p:deg_f_alpha}
Consider the grading of the polynomial ring \(\Ch_{\mup}(k)[x]\) induced by the grading of \(\Ch_{\mup}(\Speck)\) and by letting \(x\) have degree \(1\).
Since \(\phi(x)\) is homogeneous of degree \(p\), each \(f_i\) is homogeneous of degree \(i\).
Thus by \rref{p:g_graded} we have in the situation of \rref{p:phi}
\[
f_\alpha(E) \in \Ch^{|\alpha|}_{\mup}(X).
\]
\end{para}

\begin{lemma}
\label{lemm:f_alpha_E}
Let \(G\) be a diagonalizable group of finite type over \(k\) containing \(\mu_p\) as a subgroup.
Let \(X\) be a smooth \(k\)-variety with a \(G\)-action, and \(E \to X\) a \(G\)-equivariant vector bundle.
Assume that \(\mu_p\) acts trivially on \(X\).
Then for any partition \(\alpha\) and any \(r \in \Fp\), we have
\[
\epsilon_r(c_\alpha(E)) \in \Fil_{G/\mu_p}(X) \;\quad \text{and} \;\quad \epsilon_r(f_\alpha(E)) \in \Fil^{\pi_{pq}(\alpha),q}_{G/\mu_p}(X).
\]
\end{lemma}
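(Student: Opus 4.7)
The plan is to use Lemma \ref{lemm:decomp_diag} to split $E$, then reinterpret $\epsilon_r$ applied to $f_\gamma(F_c\otimes\character_c)$ as a generalized Chern class attached to an auxiliary polynomial family, and finally appeal to Lemma \ref{lemm:g:div}.

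I would first apply Lemma \ref{lemm:decomp_diag} with $C=\mu_p$ to get a $\mu_p$-equivariant isomorphism $E \simeq \bigoplus_{c\in\Zz/p} F_c\otimes\character_c$ with each $F_c$ admitting a $G/\mu_p$-equivariant structure. Iterating \rref{p:g_alpha_sum} yields
\[
f_\alpha(E)=\sum_{\alpha=\bigcup_c\alpha^c}\prod_{c} f_{\alpha^c}(F_c\otimes\character_c),
\]
and the analogous identity for $c_\alpha(E)$. Since $\epsilon_r$ is a ring morphism and the filtration $\Fil_{G/\mu_p}^{*,q}$ is multiplicative via \rref{p:piq_sum}, both claims reduce to showing, for each $c\in\Zz/p$ and each partition $\gamma$, that $\epsilon_r(f_\gamma(F_c\otimes\character_c))\in\Fil_{G/\mu_p}^{\pi_{pq}(\gamma),q}(X)$ and $\epsilon_r(c_\gamma(F_c\otimes\character_c))\in\Fil_{G/\mu_p}(X)$.

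Next, for each $c\in\Zz/p$ I would set $h_i^{(c)}(y)=\epsilon_r(f_i)(y+cr)\in \Fp[y]$. Using the splitting principle for $F_c$, the formula \eqref{eq:g_line}, and the identity $c_1(L\otimes\character_c)=c_1(L)+c\cdot c_1(\character_1)$, one verifies
\[
\epsilon_r(f_\gamma(F_c\otimes\character_c))=(h^{(c)})_\gamma(F_c),
\]
the generalized Chern class attached via Definition \ref{def:gen_CF} to the nonequivariant family $h^{(c)}=(h_i^{(c)})$. The critical step is then the divisibility of $h_i^{(c)}$: the identity $\prod_{a\in\Fp}(x+ar)=x^p-r^{p-1}x$ (valid for every $r\in\Fp$) gives $\epsilon_r(f_i(x))=x^{i-p\lfloor i/p\rfloor}(x^p-r^{p-1}x)^{\lfloor i/p\rfloor}$; combined with the Fermat identities $c^p=c$ and $r^p=r$ in $\Fp$, which yield $(y+cr)^p-r^{p-1}(y+cr)=y^p-r^{p-1}y$, one obtains
\[
h_i^{(c)}(y)=(y+cr)^{i-p\lfloor i/p\rfloor}(y^p-r^{p-1}y)^{\lfloor i/p\rfloor},
\]
which is divisible by $y^{\lfloor i/p\rfloor}$.

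Now Lemma \ref{lemm:g:div} applied with trivial group and $u_i=\lfloor i/p\rfloor$ expresses $(h^{(c)})_\gamma(F_c)$ as an $\Fp$-linear combination of classes $c_\beta(F_c)$ with $\beta\geq\tilde u$, where $\tilde u$ is the partition built from $(\lfloor \gamma_1/p\rfloor,\dots,\lfloor \gamma_s/p\rfloor)$. Since $\pi_q(\tilde u)=\sum_j\lfloor\lfloor \gamma_j/p\rfloor/q\rfloor=\pi_{pq}(\gamma)$, \rref{p:piq_geq} gives $\pi_q(\beta)\geq\pi_{pq}(\gamma)$. Because $F_c$ admits a $G/\mu_p$-equivariant structure, $c_\beta(F_c)\in\Fil_{G/\mu_p}^{\pi_q(\beta),q}(X)\subset\Fil_{G/\mu_p}^{\pi_{pq}(\gamma),q}(X)$, which settles the second claim. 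The first claim follows by rerunning the same argument with $f_i$ replaced by $g_i(x)=x^i$: the divisibility is vacuous, Lemma \ref{lemm:g:div} merely supplies $\Fp$-linear combinations of the $c_\beta(F_c)$, and these all lie in $\Fil_{G/\mu_p}(X)=\Fil_{G/\mu_p}^{0,q}(X)$. The principal obstacle is the algebraic identity in the third paragraph, which is precisely what the polynomial $\phi$ was designed to yield.
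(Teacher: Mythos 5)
Your proof is correct and follows essentially the same route as the paper: decompose \(E\) via Lemma~\ref{lemm:decomp_diag}, reduce to a single summand \(F_c\otimes\character_c\) by multiplicativity of the filtration, shift the variable by the first Chern class of \(\character_c\), observe the resulting divisibility by the \(\lfloor i/p\rfloor\)-th power of the variable (the purpose of \(\phi\)), and conclude with Lemma~\ref{lemm:g:div} and \rref{p:piq_geq}. The only (harmless) difference is that you apply \(\epsilon_r\) to the coefficients first and compute in \(\Fp[y]\), whereas the paper keeps the computation in \(\Ch_{\mup}(\Speck)[x]\) and applies \(\epsilon_r\) at the very end.
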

\begin{proof}
We let \(g_i =x^i\), resp.\ \(g_i =f_i\) (see \rref{p:phi}), for \(i \in \Nn\).
By \rref{lemm:decomp_diag} we may find \(G/\mu_p\)-equivariant vector bundles \(F_c\) for \(c \in \Zz/p\) such that we have a \(\mu_p\)-equivariant decomposition
\[
E \simeq \bigoplus_{c \in \Zz/p} F_c \otimes \character_c.
\]
It follows from \rref{p:g_alpha_sum} that \(g_\alpha(E)\) is an \(\Fp\)-linear combination of elements of the form
\[
\prod_{c\in \Zz/p}g_{\alpha^c}(F_c \otimes \character_c),
\]
where \(\alpha^0,\dots,\alpha^{p-1}\) are partitions such that \(\alpha^0 \cup \dots \cup \alpha^{p-1} = \alpha\), and therefore by \rref{p:piq_sum}
\[
\pi_{pq}(\alpha^0) + \dots + \pi_{pq}(\alpha^{p-1}) = \pi_{pq}(\alpha).
\]
In view of the multiplicativity of the filtration \((\Fil^{m,q}_{G/\mu_p}(X))\), we may thus assume that there exists a \(G/\mu_p\)-equivariant vector bundle \(F\) such that \(E \simeq F \otimes \character_c\) for a fixed \(c\in \Zz/p\), as \(\mup\)-equivariant vector bundles.
Consider the family of polynomials \(g'=(g'_i, i \in \Nn)\), where
\[
g'_i(x) = g_i(x+c_1(\character_c)) \in \Ch_{\mup}(\Speck)[x].
\]
If \(L \to Y\) is a \(\mup\)-equivariant line bundle, with \(Y\) smooth, then
\[
g_i(c_1(L\otimes \character_c)) = g_i(c_1(L) +c_1(\character_c)) =g'_i(c_1(L)) 
\]
for all \(i \in \Nn\).
Thus it follows from the splitting principle that, for any partition \(\alpha\), we have
\[
g_\alpha(E) = g_\alpha(F \otimes \character_c)=g'_{\alpha}(F) \in \Ch_{\mu_p}(X).
\]
In case \(g_i=x^i\) for all \(i \in \Nn\), it follows from \rref{lemm:g:div} (with \(u_i=0\) for all \(i\)) that \(g_\alpha(E)=c_\alpha(E)\) is a \(\Ch_{\mup}(\Speck)\)-linear combination of the elements \(c_\beta(F)\), where \(\beta\) runs over the partitions.
It follows that \(\epsilon_r(c_\alpha(E))\) is an \(\Fp\)-linear combination of elements \(c_\beta(F)\) (by \rref{p:epsilon_retraction}, as \(F\) carries the trivial \(\mup\)-action), and thus belongs to \(\Fil_{G/\mu_p}(X)\).
This proves the first statement.

In case \(g_i=f_i\) for all \(i\in \Nn\), the formulas \eqref{eq:phi} and \eqref{eq:f_i} imply that each polynomial \(g_i'\) is divisible by \(x^{\lfloor i/p \rfloor}\).
Hence we deduce from \rref{lemm:g:div} that \(g_\alpha(E) = f_\alpha(E)\) is a \(\Ch_{\mup}(\Speck)\)-linear combination of the elements \(c_\beta(F)\), with \(\beta \geq \widetilde{\alpha}\), where \(\widetilde{\alpha}\) is the partition corresponding (upon removing zeroes) to the tuple \((\lfloor \alpha_1/p \rfloor,\dots,\lfloor \alpha_m/p \rfloor)\).
In view of \rref{p:epsilon_retraction}, it follows that \(\epsilon_r(f_\alpha(E))\) is an \(\Fp\)-linear combination of the elements \(c_\beta(F)\), for \(\beta \geq \widetilde{\alpha}\).
For such \(\beta\), we have \(\pi_q(\beta) \geq \pi_q(\widetilde{\alpha}) = \pi_{pq}(\alpha)\) (see \rref{p:piq_geq}), proving the second statement.
\end{proof}

\section{Fixed locus dimension}

\begin{para}(See \cite[Proposition~A.8.10~(1)]{CGP}.)
Let \(G\) be a linear algebraic group over \(k\), and let \(X\) be a \(k\)-variety with a \(G\)-action.
The \emph{fixed locus} is a closed subscheme $X^G \subset X$ such that for any $k$-variety $T$, the subset $X^G(T)\subset X(T)$ consists of those morphisms $T\to X$ which are $G$-equivariant with respect to the trivial $G$-action on $T$.
\end{para}

\begin{para}
If \(G\) is a diagonalizable group of finite type over \(k\), and \(X\) is a smooth \(k\)-variety with a \(G\)-action, then the \(k\)-variety \(X^G\) is smooth (see e.g.\ \cite[Proposition~A.8.10~(2)]{CGP}).
\end{para}

\begin{para}
Let \(X\) be a projective \(k\)-variety, with structural morphism \(f \colon X \to k\).
We will write
\[
\deg =f_* \colon \Ch(X) \to \Ch(k)=\Fp.
\]
If \(G\) acts on \(X\), we will also write
\[
\deg_G = f_* \colon \Ch_G(X) \to \Ch_G(k).
\]
Note that the forgetful morphism of \rref{p:epsilon} verifies
\begin{equation}
\label{eq:deg_epsilon}
\deg \circ \epsilon = \epsilon \circ \deg_G.
\end{equation}
If \(\mup\) acts trivially on \(X\), then for any \(r \in \Fp\) we have
\begin{equation}
\label{eq:deg_epsilon_r}
\deg \circ \epsilon_r = \epsilon_r \circ \deg_{\mup}.
\end{equation}
\end{para}

\begin{lemma}
\label{lemm:loc}
Let \(X\) be a smooth projective \(k\)-variety with an action of \(\mup\).
Denote by \(i \colon X^{\mup} \to X\) the inclusion of the fixed locus, and by \(N\) the normal bundle to \(i\).
Then the element \(e(N)\) is invertible in \(\Ch_{\mup}(X^\mup)[\Eu_{\mu_p}^{-1}]\) (see \rref{def:epsilon_r}); let us denote by \(e(-N)\) its inverse.
Assume that \(X\) has pure dimension \(n \in \Nn\),  and let \(y \in \Ch^s_{\mup}(X)\) with \(s \leq n\).
Then for any \(r \in \Fp \smallsetminus \{0\}\) we have
\[
\deg(\epsilon(y)) = \deg \big(\epsilon_r(e(-N)) \cdot \epsilon_r(i^*(y))\big) \in \Fp.
\]
\end{lemma}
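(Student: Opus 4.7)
The plan is to apply the concentration theorem of \cite{conc} to reduce a computation on $X$ to one on $X^{\mu_p}$, then push forward to $\Spec k$ and compare the two sides via $\epsilon_r$.

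First I would establish that $e(N)$ is invertible in $\Ch_{\mu_p}(X^{\mu_p})[\Eu_{\mu_p}^{-1}]$. Since $N$ is the normal bundle of $i$, by \rref{p:fixed_diag} we have $N^{\mu_p}=0$. The argument then proceeds exactly as in the proof of Lemma \ref{lemm:epsilon_1_Euler} (taking $G=\mu_p$ there): applying \rref{lemm:decomp_diag} and the splitting principle reduces us to line bundles of the form $L \otimes \character_c$ with $c \neq 0$, whose Euler class differs from the unit $c_1(\character_c) \in \Eu_{\mu_p}$ by a nilpotent element. Next, the concentration theorem produces the equality
\[
y = i_*\bigl(e(-N) \cdot i^*(y)\bigr) \quad \text{in } \Ch_{\mu_p}(X)[\Eu_{\mu_p}^{-1}],
\]
since after inverting $\Eu_{\mu_p}$ the pushforward $i_*$ is surjective (the equivariant Chow group of the open complement vanishes) and the self-intersection formula $i^*i_* = e(N) \cdot (-)$ identifies the inverse of $i^*$.

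Applying the proper pushforward $\deg_{\mu_p}=f_*$ to both sides and using functoriality yields $\deg_{\mu_p}(y) = \deg_{\mu_p}\bigl(e(-N) \cdot i^*(y)\bigr)$ in $\Ch_{\mu_p}(k)[\Eu_{\mu_p}^{-1}]$. Applying the ring morphism $\epsilon_r$ (which extends to the localization because $r \neq 0$) to the right-hand side, then using \eqref{eq:deg_epsilon_r} and multiplicativity, produces $\deg\bigl(\epsilon_r(e(-N)) \cdot \epsilon_r(i^*(y))\bigr)$. For the left-hand side, the key point is that $\deg_{\mu_p}(y) \in \Ch^{s-n}_{\mu_p}(k)$ with $s-n \leq 0$, so by \rref{p:epsilon_indep} the map $\epsilon_r$ agrees on this component with $\epsilon_0=\epsilon$ (see \rref{p:epsilon_epsilon_0}); hence $\epsilon_r(\deg_{\mu_p}(y)) = \epsilon(\deg_{\mu_p}(y)) = \deg(\epsilon(y))$ by \eqref{eq:deg_epsilon}, giving the claimed identity.

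The main obstacle is simply invoking concentration in the correct form; once the equality $y = i_*(e(-N)\cdot i^*(y))$ is available, everything else is a careful bookkeeping of degrees. The hypothesis $s \leq n$ is precisely what is needed to bridge the non-graded morphism $\epsilon_r$ (on the localization) with the forgetful $\epsilon$, since it forces $\deg_{\mu_p}(y)$ into the non-positive degree range where the two coincide.
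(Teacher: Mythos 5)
Your proof is correct and follows essentially the same route as the paper's: invertibility of \(e(N)\) via \(N^{\mup}=0\) and Lemma \rref{lemm:epsilon_1_Euler}, the concentration theorem identity \(y=i_*(e(-N)\cdot i^*(y))\), compatibility of \(\epsilon_r\) with \(\deg_{\mup}\), and the observation that \(\deg_{\mup}(y)\) lands in degree \(s-n\leq 0\) where all the \(\epsilon_r\) coincide. The only quibble is that \(N^{\mup}=0\) does not follow from \rref{p:fixed_diag} (which merely identifies the fixed locus of a bundle's total space with its weight-zero subbundle); the paper cites \cite[(1.4.9)]{conc} for this fact about the normal bundle to the fixed locus.
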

\begin{proof}
By \cite[(1.4.9)]{conc} we have \(N^{\mup}=0\).
Thus by \rref{lemm:epsilon_1_Euler} the element \(e(N)\) is invertible in \(\Ch_{\mup}(X^\mup)[\Eu_{\mu_p}^{-1}]\).
We have \(\deg_{\mu_p}(y) \in \Ch^{s-n}_{\mu_p}(k)\) with \(s-n\leq 0\), and thus
\begin{equation}
\label{eq:epsilon_0_1:2}
\deg(\epsilon(y))   \overset{\eqref{eq:deg_epsilon}}{=} \epsilon(\deg_{\mu_p}(y))  \overset{\rref{p:epsilon_epsilon_0}}{=}  \epsilon_0(\deg_{\mu_p}(y)) \overset{\rref{p:epsilon_indep}}{=} \epsilon_r(\deg_{\mu_p}(y)).
\end{equation}
By the concentration theorem \cite[(4.5.7)]{conc} we have
\[
y = i_*(e(-N) \cdot i^*(y)) \in \Ch_{\mu_p}(X)[\Eu_{\mu_p}^{-1}].
\]
Since the following diagram commutes
\[
\xymatrix{
\Ch_{\mup}(X^\mup)[\Eu_{\mu_p}^{-1}] \ar[rr]^{i_*} \ar[rd]_{\deg_{\mup}} && \Ch_{\mup}(X)[\Eu_{\mu_p}^{-1}]  \ar[ld]^{\deg_{\mup}}\\
&\Ch_{\mup}(\Speck)[\Eu_{\mu_p}^{-1}]  &
}
\]
we deduce that
\[
\epsilon_r(\deg_{\mu_p}(y))  = \epsilon_r(\deg_{\mu_p}(e(-N) \cdot i^*(y))) \overset{\rref{eq:deg_epsilon_r}}{=} \deg(\epsilon_r(e(-N) \cdot i^*(y))).
\]
We conclude using \eqref{eq:epsilon_0_1:2} and the fact that \(\epsilon_r\) is a ring morphism.
\end{proof}

\begin{para}
\label{p:diag_pgroup}
We say that a diagonalizable group \(G\) over \(k\) is a \emph{finite diagonalizable \(p\)-group} if the cardinality of its character group \(\widehat{G}\) is a power of the prime \(p\).
\end{para}

\begin{lemma}
\label{cor:Fil_diag}
Let \(G\) be a finite diagonalizable \(p\)-group acting on a smooth projective \(k\)-variety \(X\).
Let \(q = |\widehat{G}|\).
Then for any \(m \in \Nn\), 
\[
\deg \Fil_G^{m,q}(X) \subset \deg\Big(\bigoplus_{i \geq m} \Ch^i(X^G)\Big).
\]
\end{lemma}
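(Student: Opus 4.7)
The natural approach is induction on $q$, which is a power of $p$. The base case $q=1$ forces $G$ to be trivial, so $X^G = X$; since $\pi_1(\alpha) = |\alpha|$, every generator of $\Fil_G^{m,1}(X)$ already lies in $\Ch^i(X)$ for some $i \geq m$, and there is nothing to prove.

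For the inductive step, pick a subgroup $\mu_p \subset G$, let $\iota \colon Y \hookrightarrow X$ be the inclusion of $Y = X^{\mu_p}$, and let $N$ be its normal bundle. Then $Y$ is a smooth projective $k$-variety on which $G/\mu_p$ acts with $Y^{G/\mu_p} = X^G$ and $|\widehat{G/\mu_p}| = q/p$. By Lemma \ref{lemm:C_G_vb} it suffices to verify the claim for a generator
\[
y = c_{\alpha^1}(E_1) \cdots c_{\alpha^s}(E_s), \qquad \sum_j \pi_q(\alpha^j) \geq m,
\]
where the $E_j$ are genuine $G$-equivariant vector bundles on $X$. We may further assume $\sum_j |\alpha^j| \leq \dim X$, since otherwise $y = 0$.

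The central move is to lift $y$ to the $\mu_p$-equivariant ring using the deformed Chern classes $f_i$ of \ref{p:phi}: set
\[
\tilde y = f_{\alpha^1}(E_1) \cdots f_{\alpha^s}(E_s) \in \Ch_{\mu_p}^{\sum|\alpha^j|}(X).
\]
Then $\epsilon(\tilde y) = y$ by \eqref{eq:epsilon_f_c}, and Lemma \ref{lemm:loc} (applied with $r = 1$) gives
\[
\deg(y) = \deg\bigl(\epsilon_1(e(-N)) \cdot \epsilon_1(\iota^*\tilde y)\bigr).
\]
Now Lemma \ref{lemm:epsilon_1_Euler} places $\epsilon_1(e(-N))$ into $\Fil_{G/\mu_p}(Y)$, while Lemma \ref{lemm:f_alpha_E}, applied to each $\iota^*E_j$ with the ``$q$'' of that lemma chosen to be $q/p$, gives
\[
\epsilon_1(\iota^* f_{\alpha^j}(E_j)) = \epsilon_1(f_{\alpha^j}(\iota^*E_j)) \in \Fil^{\pi_q(\alpha^j),\, q/p}_{G/\mu_p}(Y),
\]
using functoriality of $f_{\alpha^j}$ and the identity $\pi_{p\cdot(q/p)} = \pi_q$. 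Multiplicativity of the filtration, together with the bound $\sum_j \pi_q(\alpha^j) \geq m$, then places the product $\epsilon_1(e(-N)) \cdot \epsilon_1(\iota^*\tilde y)$ into $\Fil^{m,\,q/p}_{G/\mu_p}(Y)$, and the inductive hypothesis applied to $G/\mu_p$ acting on $Y$ finishes the argument since $Y^{G/\mu_p} = X^G$.

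The main obstacle is matching the drop in the filtration parameter $q \mapsto q/p$ to the drop in the order of the character group when restricting to the $\mu_p$-fixed locus. The deformed classes $f_\alpha$ are designed precisely for this: the divisibility of $f_i(x)$ by $x^{\lfloor i/p\rfloor}$, combined with the $\mu_p$-isotypic decomposition of $E$ provided by Lemma \ref{lemm:decomp_diag}, forces $\epsilon_1 \circ \iota^*$ to land in higher Chern codimension downstairs, thereby converting the ``partition division'' by $pq$ upstairs into partition division by $q$ on $Y$.
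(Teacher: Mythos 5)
Your argument is correct and follows essentially the same route as the paper: induction on $q$ via a chosen $\mu_p\subset G$, reduction to honest equivariant bundles by Lemma~\ref{lemm:C_G_vb}, lifting the Conner--Floyd classes to the deformed classes $f_\alpha$, applying Lemma~\ref{lemm:loc} with $r=1$, and feeding Lemmas~\ref{lemm:epsilon_1_Euler} and~\ref{lemm:f_alpha_E} (with parameter $q'=q/p$, so that $\pi_{pq'}=\pi_q$) into the inductive hypothesis for $G/\mu_p$ acting on $X^{\mu_p}$. The one point the paper treats more carefully is the reduction to $X$ of pure dimension $n=\sum_j|\alpha^j|$ --- splitting $X$ $G$-equivariantly into equidimensional pieces and observing that only the $n$-dimensional piece contributes to $\deg$ --- which is what the hypotheses of Lemma~\ref{lemm:loc} actually require; your weaker assumption $\sum_j|\alpha^j|\le\dim X$ does not quite suffice when $X$ is not equidimensional.
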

\begin{proof}
We proceed by induction on \(q\), the case \(q=1\) being clear, as then \(G=1\) and
\[
\Fil_G^{m,1}(X) \subset \bigoplus_{i \geq m} \Ch^i(X).
\]
So we assume that \(q >1\).
Then we may find an inclusion \(\mup \subset G\).
Consider partitions \(\alpha^1,\dots,\alpha^s\) such that
\begin{equation}
\label{eq:pi_geq_m}
\pi_{q}(\alpha^1) + \cdots + \pi_{q}(\alpha^s)  \geq m,
\end{equation}
and \(G\)-equivariant vector bundles \(E_1,\dots,E_s\) over \(X\).
Set 
\[
\gamma =  c_{\alpha^1}(E_1)\cdots c_{\alpha^s}(E_s) \in \Ch^n(X),
\]
where \(n = |\alpha_1| + \cdots + |\alpha_s|\).
To prove the lemma, in view of \rref{lemm:C_G_vb} it will suffice to show that 
\[
\deg \gamma \in \deg\Big(\bigoplus_{i \geq m} \Ch^i(X^G)\Big).
\]
Note that \(X\) decomposes \(G\)-equivariantly as \(X = X_0 \sqcup \cdots \sqcup X_s\), where each \(X_i\) has pure dimension \(i\).
Setting \(X_{s+1}= \cdots = X_n=\varnothing\), we may arrange that \(n \leq s\).
Then \(\deg \gamma = \deg (\gamma|_{X_n})\).
As \((X_n)^G\) is a closed subscheme of \(X^G\), we have \(\deg \Ch^i((X_n)^G)\subset \deg \Ch^i(X^G)\) for every \(i \in \Nn\).
Thus we may replace \(X\) with \(X_n\), and assume that \(X\) has pure dimension \(n\).

As \(q>1\), we may find a subgroup \(\mup \subset G\).
Set \(q'=q/p\), \(X'=X^\mup\) and \(G'=G/\mup\).
Consider the element
\[
\delta = f_{\alpha^1}(E_1)\cdots f_{\alpha^s}(E_s)\in \Ch_{\mu_p}(X).
\]
Then \(\deg \gamma = \deg ( \epsilon(\delta))\) by \eqref{eq:epsilon_f_c}.
As \(\delta \in \Ch^n_{\mu_p}(X)\) by \rref{p:deg_f_alpha}, we have by \rref{lemm:loc} (and using its notation)
\[
\deg ( \epsilon(\delta)) = \deg \big(\epsilon_1(e(-N)) \cdot \epsilon_1(f_{\alpha^1}(i^*E_1))\cdots \epsilon_1(f_{\alpha^s}(i^*E_s))\big).
\]
In view of \rref{lemm:f_alpha_E} and \rref{lemm:epsilon_1_Euler} we deduce that the product
\[
\epsilon_1(e(-N)) \cdot \epsilon_1(f_{\alpha^1}(i^*E_1)) \cdots \epsilon_1(f_{\alpha^s}(i^*E_s))
\]
belongs to (taking \rref{eq:pi_geq_m} into account)
\[
\Fil_{G'}(X') \cdot \Fil^{\pi_{pq'}(\alpha^1),q'}_{G'}(X') \cdots \Fil^{\pi_{pq'}(\alpha^s),q'}_{G'}(X') \subset \Fil^{m,q'}_{G'}(X').
\]
By induction we know that 
\[
\deg \Fil^{m,q'}_{G'}(X') \subset \deg\Big(\bigoplus_{i \geq m} \Ch^i(X'^{G'})\Big).
\]
Since \(X'^{G'} = X^G\), this concludes the proof.
\end{proof}

\begin{theorem}
\label{prop:floor_q}
Let \(G\) be a finite diagonalizable \(p\)-group (see \rref{p:diag_pgroup})
acting on a smooth projective \(k\)-variety \(X\).
Let \(q = |\widehat{G}|\).
Let \(\alpha^1,\dots,\alpha^s\) be partitions, and \(E_1,\dots,E_s \in \im (K_0(X;G) \to K_0(X))\) be such that
\[
\deg ( c_{\alpha^1}(E_1) \cdots c_{\alpha^s}(E_s)) \neq 0 \in \Fp.
\]
Then using the notation of \eqref{eq:def_piq}, we have
\[
\dim X^G \geq \pi_q(\alpha^1) + \dots + \pi_q(\alpha^s).
\]
\end{theorem}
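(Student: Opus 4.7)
The theorem follows almost immediately from \rref{cor:Fil_diag}, so my proof proposal is short.

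\textbf{Plan.} Set $m = \pi_q(\alpha^1) + \dots + \pi_q(\alpha^s)$. By the very definition of the filtration $\Fil_G^{m,q}(X)$, the product
\[
\gamma := c_{\alpha^1}(E_1)\cdots c_{\alpha^s}(E_s) \in \Ch(X)
\]
lies in $\Fil_G^{m,q}(X)$, since the $E_j$ are assumed to belong to $\im(K_0(X;G)\to K_0(X))$ and the weights $\pi_q(\alpha^j)$ sum to $m$. Applying \rref{cor:Fil_diag} then gives
\[
\deg \gamma \in \deg\Big(\bigoplus_{i\geq m} \Ch^i(X^G)\Big) \subset \Fp.
\]

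\textbf{Extracting the dimensional bound.} The only remaining step is to translate ``$\deg \Ch^i(X^G)\neq 0$ for some $i\geq m$'' into the bound $\dim X^G \geq m$. Decompose the smooth \(k\)-variety \(X^G\) into its connected components of pure dimension, so \(X^G = \coprod_j Y_j\) with each \(Y_j\) of pure dimension \(j\) (some possibly empty). Then \(\Ch^i(X^G) = \bigoplus_j \Ch^i(Y_j)\), and the restriction of \(\deg\) to the summand $\Ch^i(Y_j)$ factors through $\Ch^{i-j}(k)$, which vanishes unless $i=j$. Hence if $\deg$ is nonzero on some $\Ch^i(X^G)$ with $i\geq m$, necessarily the component $Y_i$ is nonempty, i.e.\ $X^G$ has a component of dimension $i\geq m$. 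This gives $\dim X^G \geq m$, as required.

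\textbf{Where the work really lies.} Nothing here is difficult; the substantive content has already been packaged into \rref{cor:Fil_diag} (which proceeded by induction on $q$ using concentration at $\mu_p$, the deformed Chern classes $f_\alpha$, and the control of $\epsilon_1(e(-N))$ provided by \rref{lemm:epsilon_1_Euler} and \rref{lemm:f_alpha_E}). The final step from that lemma to the theorem is simply the observation that any product of equivariant Conner--Floyd Chern classes automatically belongs to the filtration whose level matches $\pi_q(\alpha^1)+\cdots+\pi_q(\alpha^s)$, followed by the elementary dimension-counting argument above.
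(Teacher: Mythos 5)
Your proposal is correct and follows exactly the paper's own route: set \(m\) equal to the sum of the \(\pi_q(\alpha^j)\), observe that the hypothesis forces \(\deg \Fil_G^{m,q}(X)\neq 0\), and apply Lemma~\ref{cor:Fil_diag}. Your final dimension-counting step (via the vanishing of \(\deg\) on \(\Ch^i(Y_j)\) for \(i\neq j\)) is a slightly more explicit version of the paper's one-line observation that \(\Ch^i(X^G)\neq 0\) forces \(i\leq\dim X^G\), but it is the same argument in substance.
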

\begin{proof}
Let \(m= \pi_q(\alpha^1) + \dots + \pi_q(\alpha^s)\).
The assumption of the theorem implies that \(\deg \Fil_G^{m,q}(X)\neq 0\), hence by \rref{cor:Fil_diag} we have \(\Ch^i(X^G) \neq 0\) for some \(i \geq m\), and in particular  \(\dim X^G \geq m\).
\end{proof}

\begin{definition}
Let \(X\) be a smooth projective \(k\)-variety, and \(\Tan_X\) its tangent bundle.
The \emph{modulo \(p\) Chern number} corresponding to a partition \(\alpha\) is
\[
c_\alpha(X) = \deg c_\alpha(-\Tan_X) \in \Fp.
\]
\end{definition}

\begin{corollary}
\label{cor:floor_q}
Let \(G\) be a finite diagonalizable \(p\)-group acting on a smooth projective \(k\)-variety \(X\).
Let \(q = |\widehat{G}|\).
Let \(\alpha=(\alpha_1,\dots,\alpha_m)\) be a partition such that \(c_\alpha(X) \neq 0 \in \Fp\).
Then
\[
\dim X^G \geq \Big\lfloor \frac{\alpha_1}{q} \Big\rfloor +\dots + \Big\lfloor \frac{\alpha_m}{q} \Big\rfloor.
\]
\end{corollary}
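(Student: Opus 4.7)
This corollary is a direct specialization of Theorem \ref{prop:floor_q}, and the plan is essentially to match the hypotheses and invoke it. First I would take $s=1$, the single partition $\alpha^1 = \alpha$, and the class $E_1 = -\Tan_X \in K_0(X)$. The sole verification needed before applying the theorem is that $-\Tan_X$ lies in the image of the forgetful map $K_0(X;G) \to K_0(X)$. This holds because the $G$-action on $X$ by automorphisms induces a canonical $G$-equivariant structure on the tangent bundle $\Tan_X$; hence $\Tan_X$ is in the image, and so is $-\Tan_X$ since the image is a subgroup.

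Next I would rewrite the nonvanishing hypothesis in the form required by the theorem. By definition
\[
c_\alpha(X) = \deg c_\alpha(-\Tan_X) \in \Fp,
\]
so the assumption $c_\alpha(X) \neq 0$ is exactly the hypothesis $\deg c_{\alpha^1}(E_1) \neq 0$ of Theorem~\ref{prop:floor_q}. Applying that theorem therefore yields
\[
\dim X^G \geq \pi_q(\alpha) = \Big\lfloor \frac{\alpha_1}{q} \Big\rfloor + \cdots + \Big\lfloor \frac{\alpha_m}{q} \Big\rfloor,
\]
which is the desired inequality.

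There is no real obstacle here: the whole content of the corollary has been packaged into Theorem~\ref{prop:floor_q}, and the only thing to check is the (standard) existence of an equivariant structure on the tangent bundle so that $-\Tan_X$ qualifies as an admissible input to the theorem.
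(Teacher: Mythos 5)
Your proposal is correct and is precisely the argument the paper leaves implicit: apply Theorem~\ref{prop:floor_q} with $s=1$, $\alpha^1=\alpha$ and $E_1=-\Tan_X$, the only point to note being that the $G$-action endows $\Tan_X$ with a canonical equivariant structure, so $-\Tan_X$ lies in the image of $K_0(X;G)\to K_0(X)$. Nothing further is needed.
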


\begin{remark}
The conclusion of the corollary remains valid if we assume \(\deg c_\alpha(\Tan_X) \neq 0\) instead of \(\deg c_\alpha(-\Tan_X) \neq 0\).
\end{remark}

\begin{remark}
Note that, when \(\alpha\) is a partition,
\[
q\cdot \pi_q(\alpha) \geq |\alpha| - (q-1)\length(\alpha).
\]
In the situation of \rref{cor:floor_q}, if \(X\) is equidimensional, we must have \(|\alpha|=\dim X\), and thus 
\[
\dim X \leq q \cdot \dim X^G+(q-1)\length(\alpha).
\]
For this reason \rref{cor:floor_q} tends to be more useful for partitions of small length.
\end{remark}

\begin{remark}
Similar bounds were obtained in topology by Kosniowski--Stong for \(\Zz/2\)-actions \cite[p.\ 314]{KS}, and \((\Zz/2)^k\)-actions \cite[p.\ 737]{Kosniowski-Stong-Z2k}.
\end{remark}

\section{The cobordism ring}
\begin{definition}
When \(X\) is a smooth projective \(k\)-variety, we will write (using the notation of \rref{p:bb})
\[
\lc X \rc = \sum_{\alpha} c_\alpha(X) b_\alpha \in \Fp[\bb],
\]
where \(\alpha\) runs over the partitions.
The subset of those classes \(\lc X \rc\), where \(X\) runs over the smooth projective \(k\)-varieties, will be denoted by \(\Laz_p \subset \Fp[\bb]\).
This is a graded subring, where \(\deg b_i=-i\).
\end{definition}

\begin{remark}
	The ring \(\Laz_p\) can be identified with the coefficient ring of the universal commutative formal group law whose formal multiplication by \(p\) vanishes (see \cite[Proposition~3.2.5]{inv}).
For \(p=2\) this is Thom's unoriented cobordism \(MO^*\) (see \cite[Remark~1.2.2]{LM-Al-07}).
\end{remark}

\begin{para}
When \(\alpha\) is a partition, we will denote by 
\[
c_\alpha \colon \Fp[\bb] \to \Fp
\]
the map given by taking the \(b_\alpha\)-coefficient.
Thus, when \(X\) is a smooth projective \(k\)-variety, we have
\[
c_\alpha(\lc X \rc) = c_\alpha(X) \in \Fp.
\]
\end{para}

\begin{para}
\label{p:c_product}
If \(\alpha\) is a partition, we have for every \(x,y \in \Laz_p\)
\[
c_\alpha(xy) = \sum_{\beta \cup \gamma = \alpha} c_\beta(x) \cdot c_\gamma(y).
\]
\end{para}

\begin{definition}
\label{def:dim_q}
Let \(q \in \Nn \smallsetminus \{0\}\).
For an element \(x \in \Laz_p\), we set
\[
\dim_qx = \sup \{\pi_q(\alpha) | \text{ \(\alpha\) is a partition such that \(c_\alpha(x) \neq 0\)}\} \in \{-\infty\} \cup \Nn.
\]
\end{definition}

Theorem \rref{cor:floor_q} can be reformulated as follows:
\begin{theorem}
\label{th:deg_q}
Let \(G\) be a finite diagonalizable \(p\)-group acting on a smooth projective \(k\)-variety \(X\).
Let \(q = |\widehat{G}|\).
 Then \(\dim X^G \geq \dim_q \lc X\rc\).
\end{theorem}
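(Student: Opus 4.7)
The plan is to observe that \rref{th:deg_q} is a repackaging of \rref{cor:floor_q} in the language of the modulo \(p\) cobordism ring. By \rref{def:dim_q}, the quantity \(\dim_q \lc X\rc\) is the supremum of \(\pi_q(\alpha)\) over all partitions \(\alpha\) such that \(c_\alpha(X) \neq 0\). If no such partition exists, then \(\dim_q \lc X\rc = -\infty\) and the inequality is vacuous. Otherwise it suffices to prove, for each individual such \(\alpha\), that \(\dim X^G \geq \pi_q(\alpha)\), and then take the supremum on the right.

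For a fixed partition \(\alpha\) with \(c_\alpha(X) \neq 0\), I would invoke \rref{cor:floor_q} directly. Its hypothesis reads \(c_\alpha(X) = \deg c_\alpha(-\Tan_X) \neq 0\), which is what we have; here the tangent bundle \(\Tan_X\) carries a canonical \(G\)-equivariant structure induced by the action of \(G\) on \(X\), so that \(-\Tan_X \in \im(K_0(X;G) \to K_0(X))\). The conclusion \(\dim X^G \geq \lfloor \alpha_1/q \rfloor + \dots + \lfloor \alpha_m/q \rfloor\) is, by the definition \eqref{eq:def_piq}, exactly \(\dim X^G \geq \pi_q(\alpha)\).

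There is no real obstacle: all the substantive work (the concentration theorem, the partition-dividing technique, and the filtration estimates of \rref{cor:Fil_diag}) is already carried out in the proof of \rref{prop:floor_q}/\rref{cor:floor_q}. The only remaining bookkeeping is that the supremum in \rref{def:dim_q} is actually attained whenever it is finite, since it ranges over a subset of \(\Nn\) that is bounded above by \(\lfloor \dim X / q\rfloor\) (by \rref{p:pi_weight}, together with the fact that \(c_\alpha(X) = 0\) unless \(|\alpha| = \dim X\) by dimensional reasons for the degree map); so the per-partition inequality transfers to the supremum, giving \(\dim X^G \geq \dim_q \lc X \rc\).
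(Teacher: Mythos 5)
Your proposal is correct and is exactly the paper's route: the paper presents Theorem~\rref{th:deg_q} as a direct reformulation of \rref{cor:floor_q}, obtained by applying that corollary to each partition \(\alpha\) with \(c_\alpha(X)\neq 0\) and passing to the supremum in \rref{def:dim_q}. (Your closing remark about the supremum being attained is harmless but unnecessary: since \(\dim X^G \geq \pi_q(\alpha)\) for every such \(\alpha\), it is automatically an upper bound and hence at least the supremum.)
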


\begin{remark}
	The bound of (\ref{th:deg_q}) is sharp.
Indeed it may be verified that every element \(x \in \Laz_p\) is represented by a smooth projective \(k\)-variety \(X\) with a \(G\)-action satisfying \(\dim_q x = \dim X^G\).
\end{remark}

In the remainder of this section, we give an interpretation of the function \(\dim_q\) in terms of cobordism generators.

\begin{para}
\label{p:Lp_basis}
Let \(N_p \subset \Nn\) be the set of integers \(i \geq 1\) such that \(i+1\) is not a power of \(p\).
It follows from \cite[Theorem~8.2]{Mer-Ori}
and \cite[II, \S7]{Adams-Stable}, that the \(\Fp\)-algebra \(\Laz_p\) is polynomial on variables indexed by \(N_p\).
Moreover, a family \(\ell_i \in \Laz_p^{-i}\) for \(i\in N_p\) constitutes a system of polynomial generators of \(\Laz_p\) if and only if \(c_{(i)}(\ell_i) \neq 0\) for each \(i\).
\end{para}

\begin{para}
\label{p:indec}
Recall that an element of \(\Laz_p\) is called \emph{indecomposable} if it does not belong to \(I^2\), where \(I \subset \Laz_p\) is the ideal generated by homogeneous elements of nonzero degrees.
An element \(x \in \Laz_p\) is indecomposable if and only if \(c_{(n)}(x) \neq 0 \in \Fp\) for some \(n \in \Nn\); if this is the case, then \(n+1\) is not a power of \(p\) (see e.g.\ \cite[(7.3.2)]{inv}).
\end{para}

\begin{para}
The polynomial algebra \(\Fp[X_i, i\in N_p]\) is graded by letting \(X_i\) be homogeneous of degree \(i\).
The degree of a polynomial will be denoted by \(\deg P\).
\end{para}

\begin{para}
\label{p:deg_q}
Let \(q \in \Nn \smallsetminus \{0\}\).
We define another grading of the polynomial algebra \(\Fp[X_i, i\in N_p]\) by letting the variable \(X_i\) be homogeneous of degree \(\lfloor i/q \rfloor\).
We will denote by \(\deg_q P\) the degree of a polynomial \(P\) with respect to this grading.
\end{para}

\begin{para}
\label{p:deg_q_pi_q}
For any nonempty partition \(\alpha=(\alpha_1,\dots,\alpha_m)\), the monomial \(X_\alpha = X_{\alpha_1} \cdots X_{\alpha_m}\) satisfies
\[
\deg X_\alpha = |\alpha| \quad \text{ and } \quad \deg_q X_\alpha = \pi_q(\alpha).
\]
\end{para}

\begin{para}
\label{p:deg_q_var}
If \(i \in \Nn \smallsetminus \{0\}\) we have \(\deg_q X_i \geq (i -(q-1))/q\).
\end{para}

\begin{para}
\label{p:def_succ}
Let \(\alpha,\beta\) be partitions.
Write \(\beta = (\beta_1,\ldots,\beta_s)\).
We say that \emph{\(\alpha\) refines \(\beta\)}, and write \(\alpha \succeq \beta\), when there exist partitions \(\alpha^1,\ldots,\alpha^s\) such that \(\beta_i = |\alpha^i|\) for all \(i=1,\ldots,s\) and \(\alpha = \alpha^1 \cup \cdots \cup \alpha^s\).
\end{para}

\begin{para}
\label{p:pi_dec}
It follows from \rref{p:pi_weight} and \rref{p:piq_sum} that \(\pi_q(\alpha) \leq \pi_q(\beta)\) whenever \(\alpha \succeq \beta\).
\end{para}

\begin{lemma}
\label{lemm:succ}
Let \(\ell_i \in \Laz_p^{-i}\) for \(i\in N_p\) be a family of polynomial generators of the \(\Fp\)-algebra \(\Laz_p\).
If \(\alpha,\beta\) are partitions such that \(c_\alpha(\ell_\beta) \neq 0\) then \(\alpha \succeq \beta\).
\end{lemma}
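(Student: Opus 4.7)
The plan is to expand $c_\alpha(\ell_\beta)$ as a sum over decompositions of $\alpha$ by iterating the product formula of \rref{p:c_product}, and then use the grading of $\Laz_p$ to read off the refinement condition.

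Write $\beta = (\beta_1, \ldots, \beta_s)$, so that $\ell_\beta = \ell_{\beta_1} \cdots \ell_{\beta_s}$. Iterating \rref{p:c_product} yields
\[
c_\alpha(\ell_\beta) = \sum_{\alpha^1 \cup \cdots \cup \alpha^s = \alpha} c_{\alpha^1}(\ell_{\beta_1}) \cdots c_{\alpha^s}(\ell_{\beta_s}),
\]
where the sum runs over ordered tuples $(\alpha^1, \ldots, \alpha^s)$ of partitions with union equal to $\alpha$. From the assumption $c_\alpha(\ell_\beta) \neq 0$, I extract one such tuple for which all factors $c_{\alpha^i}(\ell_{\beta_i})$ are nonzero.

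Next, I invoke the grading. Since $\ell_{\beta_i} \in \Laz_p^{-\beta_i}$ is homogeneous and $\Laz_p$ is a graded subring of $\Fp[\bb]$ in which $b_{\alpha^i}$ is of degree $-|\alpha^i|$, the expansion $\ell_{\beta_i} = \sum_\gamma c_\gamma(\ell_{\beta_i}) b_\gamma$ involves only partitions $\gamma$ of weight $\beta_i$. Hence $c_{\alpha^i}(\ell_{\beta_i}) \neq 0$ forces $|\alpha^i| = \beta_i$ for every $i = 1, \ldots, s$.

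Combining these two facts, I have produced partitions $\alpha^1, \ldots, \alpha^s$ with $|\alpha^i| = \beta_i$ and $\alpha = \alpha^1 \cup \cdots \cup \alpha^s$, which is precisely the definition of $\alpha \succeq \beta$ recalled in \rref{p:def_succ}. This completes the proof; there is no real obstacle since everything reduces to unwinding the product formula and matching weights via homogeneity.
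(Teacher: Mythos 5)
Your proof is correct and follows essentially the same route as the paper's: iterate the product formula of \rref{p:c_product} to expand \(c_\alpha(\ell_\beta)\) over decompositions \(\alpha^1\cup\cdots\cup\alpha^s=\alpha\), extract a tuple with all factors nonzero, and use the homogeneity of \(\ell_{\beta_i}\in\Laz_p^{-\beta_i}\) to force \(|\alpha^i|=\beta_i\). The only difference is that you spell out the ``degree reasons'' step, which the paper leaves implicit.
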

\begin{proof}
Write \(\beta = (\beta_1,\dots,\beta_s)\).
Then by \rref{p:c_product} we have
\[
c_\alpha(\ell_\beta) = c_\alpha(\ell_{\beta_1} \cdots \ell_{\beta_s}) = \sum_{\alpha^1 \cup \dots \cup \alpha^s=\alpha} c_{\alpha^1}(\ell_{\beta_1}) \cdots c_{\alpha^s}(\ell_{\beta_s}).
\]
Thus there exist partitions \(\alpha^1,\dots,\alpha^s\) such that \(\alpha^1 \cup \dots \cup \alpha^s=\alpha\) and \(c_{\alpha^i}(\ell_{\beta_i}) \neq 0\) for \(i=1,\dots,s\).
By degree reasons, we must have \(|\alpha^i| = \beta_i\) for \(i=1,\dots,s\), proving that \(\alpha \succeq \beta\).
\end{proof}

\begin{proposition}
\label{prop:dimq_degq}
Let \(\ell_i \in \Laz_p^{-i}\) for \(i\in N_p\) be a family of polynomial generators of the \(\Fp\)-algebra \(\Laz_p\).
Then for any \(P \in \Fp[X_i, i \in N_p]\), we have (see \rref{p:deg_q} and \rref{def:dim_q})
\[
\dim_q P(\ell_1,\dots) = \deg_qP.
\]
\end{proposition}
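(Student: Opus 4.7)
The plan is to establish the two inequalities separately. Writing $P = \sum_\beta \lambda_\beta X_\beta$, the number $d = \deg_q P$ coincides by \rref{p:deg_q_pi_q} with the maximum of $\pi_q(\beta)$ over $\beta$ with $\lambda_\beta \neq 0$. The upper bound $\dim_q P(\ell_1,\ldots) \leq d$ is almost immediate from \rref{lemm:succ}: whenever $c_\alpha(P(\ell_1,\ldots)) \neq 0$, some $c_\alpha(\ell_\beta)$ must be nonzero for a $\beta$ with $\lambda_\beta \neq 0$, giving $\alpha \succeq \beta$ by \rref{lemm:succ}, and hence $\pi_q(\alpha) \leq \pi_q(\beta) \leq d$ by \rref{p:pi_dec}.

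For the lower bound I will exhibit a single partition witnessing it. Let $M$ be the set of $\beta$ with $\lambda_\beta \neq 0$ and $\pi_q(\beta) = d$, and pick $\beta \in M$ maximal in the lexicographic order on partitions. Taking $\alpha = \beta$, I will show that the expansion
\[
c_\beta(P(\ell_1,\ldots)) = \sum_{\beta'} \lambda_{\beta'}\, c_\beta(\ell_{\beta'})
\]
collapses to the single term $\lambda_\beta\, c_\beta(\ell_\beta)$. A contributing $\beta'$ satisfies $\beta \succeq \beta'$ by \rref{lemm:succ}, whence $\pi_q(\beta') \geq d$ by \rref{p:pi_dec} and thus $\beta' \in M$; a combinatorial sublemma asserting that $\beta \succeq \beta'$ implies $\beta \leq \beta'$ in lex order (with equality iff $\beta = \beta'$) then forces $\beta' = \beta$ by lex-maximality of $\beta$ in $M$.

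It remains to check that $c_\beta(\ell_\beta) \neq 0$. Iterating \rref{p:c_product} expresses this class as $\sum \prod_i c_{\alpha^i}(\ell_{\beta_i})$ summed over decompositions $\alpha^1 \cup \cdots \cup \alpha^s = \beta$ with $s = \length(\beta)$ and $|\alpha^i| = \beta_i$; the identity $\sum_i \length(\alpha^i) = \length(\beta) = s$ combined with $\length(\alpha^i) \geq 1$ pins down $\alpha^i = (\beta_i)$, so that $c_\beta(\ell_\beta) = \prod_i c_{(\beta_i)}(\ell_{\beta_i})$, which is nonzero by the generator characterization of \rref{p:Lp_basis}. The main obstacle is the lex sublemma; I plan to prove it by induction on $\length(\beta')$, based on the observation that the largest part $\beta_1$ of $\beta$ lies inside some $\alpha^j$, forcing $\beta_1 \leq |\alpha^j| = \beta'_j \leq \beta'_1$, with equality forcing $\alpha^j = (\beta_1)$ so that, after reordering, one can iterate on the reduced partitions. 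The bookkeeping around repeated parts is routine but must be handled carefully.
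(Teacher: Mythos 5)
Your proof is correct, and the upper bound is argued exactly as in the paper (via \rref{lemm:succ} and \rref{p:pi_dec}). For the lower bound, however, you take a genuinely different route in selecting the witness partition. The paper takes \(\beta\) to be a \(\succeq\)-minimal element of \(\mathcal{P}=\{\beta:\lambda_\beta\neq 0\}\): minimality makes the sum \(\sum_{\beta\succeq\beta'}\lambda_{\beta'}c_\beta(\ell_{\beta'})\) collapse to the single term \(\lambda_\beta c_\beta(\ell_\beta)\) with no further combinatorics, and one then concludes because the sup of the \(\succeq\)-decreasing function \(\pi_q\) over the finite set \(\mathcal{P}\) is attained at a minimal element. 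You instead take \(\beta\) lex-maximal among the \(\pi_q\)-maximizers, which forces you to prove the auxiliary fact that \(\beta\succeq\beta'\) implies \(\beta\leq\beta'\) lexicographically; your sketch of that sublemma (largest part of \(\beta\) sits in some \(\alpha^j\), so \(\beta_1\leq|\alpha^j|=\beta'_j\leq\beta'_1\), with equality forcing \(\alpha^j=(\beta_1)\), then induct) is sound, and your example-free bookkeeping concern is legitimate but manageable. What your choice buys is a witness that directly realizes \(\pi_q=\deg_qP\), so you never need the ``sup attained at a minimal element'' step; what it costs is the extra lex-versus-refinement lemma, which the paper's \(\succeq\)-minimal choice renders unnecessary. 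One point where you are actually more careful than the paper: you verify via \rref{p:c_product} and \rref{p:Lp_basis} that \(c_\beta(\ell_\beta)=\prod_i c_{(\beta_i)}(\ell_{\beta_i})\neq 0\), whereas the paper simply writes \(c_\beta(x)=\lambda_\beta\), tacitly treating this product as \(1\); only its nonvanishing is needed, and your argument supplies exactly that.
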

\begin{proof}
Let \(x = P(\ell_1,\dots)\).
Let us write
\[
P = \sum_{\beta} \lambda_\beta X_\beta, \quad \text{with \(\lambda_\beta \in \Fp\)},
\]
where \(\beta\) runs over the partitions.
If \(\alpha\) is a partition, we have by \rref{lemm:succ}
\begin{equation}
\label{eq:c_alpha_succ}
c_\alpha(x) = \sum_{\alpha \succeq \beta} \lambda_\beta c_\alpha(\ell_\beta).
\end{equation}
Let us consider the sets of partitions
\[
\mathcal{P} = \{ \beta \text{ such that } \lambda_\beta \neq 0\} \quad \text{ and } \quad \mathcal{X} = \{ \alpha \text{ such that } c_\alpha(x) \neq 0\}.
\]
If \(\alpha \in \mathcal{X}\), then it follows from \eqref{eq:c_alpha_succ} that there exists \(\beta \in \mathcal{P}\) such that \(\alpha \succeq \beta\).
Conversely, if \(\beta\) is a minimal element of \(\mathcal{P}\) with respect to the partial order \(\succeq\), then it follows from \eqref{eq:c_alpha_succ} that \(c_\beta(x) = \lambda_\beta\), and so \(\beta \in \mathcal{X}\).
We have proved that \(\mathcal{P}\) and \(\mathcal{X}\) have the same sets of minimal elements (with respect to the partial order \(\succeq\)).
Since by \rref{p:pi_dec} the function \(\pi_q\) is decreasing, we conclude that
\[
\dim_q x = \sup \{\pi_q(\alpha) | \alpha\in \mathcal{X}\} = \sup \{\pi_q(\beta) | \beta\in \mathcal{P}\} = \deg_q P.\qedhere
\]
\end{proof}

\begin{remark}
Proposition \rref{prop:dimq_degq} shows	that the function \(\dim_q\) arises as the degree with respect to the grading of \(\Laz_p\) given by letting each \(\ell_i\) have degree \(\lfloor i/q \rfloor\).
Note however that this grading depends on the choice of the family of generators (while the function \(\dim_q\) does not).
\end{remark}

\begin{remark}
Let us define another grading on \(\Fp[\bb]\), which we call \(q\)-grading, by letting \(b_i\) have degree \(\lfloor i/q \rfloor\); then the degree of an element \(x \in \Laz_p \subset \Fp[\bb]\) is \(\dim_q x\).
Observe that the subring \(\Laz_p \subset \Fp[\bb]\) is \emph{not} graded (with respect to the \(q\)-grading).
For instance, we have
\[
\lc \Pp^4 \rc = b_4 + b_2^2 +b_2b_1^2 \in \Laz_2 \subset \Fd[\bb].
\]
The component of degree \(1\) of \(\lc \Pp^4\rc \in \Fd[\bb]\), with respect to the \(2\)-grading, is \(b_2b_1^2\).
This element has degree \(-4\), with respect to the usual grading of \(\Fp[\bb]\).
However there exists no polynomial \(P \in \Fd[X_i|i\in N_2]=\Fd[X_2,X_4,X_5,\dots]\) satisfying \(\deg P =4\) and \(\deg_2P =1\).
This implies that \(b_2b_1^2\) does not belong to \(\Laz_2\).
\end{remark}

\section{Applications}
\label{sect:app}
In this section we draw concrete consequences of Theorem~\rref{th:deg_q}, by attempting to control the ratio \(n/d\), where \(n=\dim X\) and \(d=\dim X^G\).

\begin{para}
Let us fix an integer \(q \in \Nn \smallsetminus \{0\}\).
Let \(I \subset \Nn \smallsetminus \{0\}\) be a subset.
We define
\[
\rho_q(I) =
\begin{cases}
\inf_{i \in I} \frac{\lfloor i/q\rfloor}{i} \in \mathbb{R} & \text{if \(I \neq \varnothing\)}\\
1/q& \text{if \(I = \varnothing\)}.
\end{cases}
\]
\end{para}

\begin{remark}
It is not difficult to see that \(\rho_q(I) \in \mathbb{Q}\): indeed, observe that for a fixed \(r=0,\dots,q-1\), the function \(\Nn \smallsetminus \{0\}\to \mathbb{Q}\) given by \(a \mapsto a/(aq+r)\) is increasing, so that, letting \(i_r = \inf \{i\in I| i=r \mod q\} \in \Nn \cup \{\infty\}\), we have when \(I \neq \varnothing\),
\[
\rho_q(I) = \inf_{\substack{r \in \{0,\dots,q-1\}\\i_r \neq \infty}} \frac{\lfloor i_r/q\rfloor}{i_r}.
\]
\end{remark}

\begin{para}
\label{ex:rho}
Heuristically \(\rho_q(I)\) is an approximation of \(1/q\), which gets better when \(I\) contains no small integers not divisible by \(q\).
More precisely:
\begin{enumerate}[(i)]
\item \label{ex:rho:1}
We have \(\rho_q(I) \leq 1/q\).

\item If \(I \subset q\Nn\), then \(\rho_q(I)=1/q\).

\item \label{ex:rho:3}
Assume that \(I \neq \varnothing\), let \(i= \min I\), and write \(i=aq+r\) with \(a\in \Nn\) and \(r \in \{0,\dots,q-1\}\).
Then
\[
\frac{a}{aq+q-1}\leq \rho_q(I) \leq \frac{a}{aq+r}.
\]
In particular, if \(r=q-1\) (i.e.\ \(i=-1 \mod q\)), then \(\rho_q(I) = a/i\).

\item \label{ex:rho:4}
If \(I \cap \{1,\dots,q-1\} = \varnothing\), then \(\rho_q(I) \geq 1/(2q-1)\) (this follows from \rref{ex:rho:3}, where \(a \geq 1\)).

\item \label{ex:rho:5}
Assume that \(q=2\).
Then
\[
\rho_2(I) =
\begin{cases}
1/2 & \text{if \(I\) contains no odd integers}\\
\frac{\lfloor n/2 \rfloor}{n}& \text{if \(n\) is the smallest odd integer in \(I\)}.
\end{cases}
\]
Since the smallest odd integer in \(N_2\) is \(5\), it follows that
\[
\rho_2(I) \geq 2/5 \quad \text{ for any \(I \subset N_2\)}.
\]
Moreover, if \(I \subset N_2 \smallsetminus \{5\}\), then \(\rho_2(I) \geq 3/7\).
\end{enumerate}
\end{para}

\begin{para}
\label{p:pi_rho}
Let \(\alpha=(\alpha_1,\dots,\alpha_m)\) be a partition such that \(\alpha_1,\dots,\alpha_m \in I\).
Then
\[
\pi_q(\alpha) \geq \rho_q(I) \cdot |\alpha|.
\]
Thus, in view of \rref{p:deg_q_pi_q} we have
\[
\deg_q X_\alpha  \geq \rho_q(I) \cdot \deg X_\alpha.
\]
\end{para}

\begin{para}
\label{p:X_d_n}
For the rest of this section \(X\) will be a smooth projective \(k\)-variety of pure dimension \(n\), with an action of a finite diagonalizable \(p\)-group \(G\) over \(k\).
We let \(q=|\widehat{G}|\), and \(d=\dim X^G\).
\end{para}

\begin{proposition}
\label{prop:A_Np}
Let \(A \subset N_p\) and \(s\in \Nn\).
Let \(I(A) \subset \Laz_p\) be the ideal generated by the homogeneous elements of degrees \(-j\) for \(j\in A\).
If \(\lc X \rc \in \Laz_p\) does not belong to \(I(A)^{s+1}\), then
\[
d \geq \rho_q(N_p \smallsetminus A) \cdot (n - (q-1)s).
\]
\end{proposition}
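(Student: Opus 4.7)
The plan is to reduce the statement to a combinatorial inequality for \(\pi_q\), combining Theorem~\rref{th:deg_q} with Proposition~\rref{prop:dimq_degq}. Fix polynomial generators \(\ell_i \in \Laz_p^{-i}\), \(i\in N_p\), via \rref{p:Lp_basis}, so that \(\Laz_p \cong \Fp[\ell_i : i\in N_p]\) as a graded ring, and write \(\lc X\rc = P(\ell_i)\) for a homogeneous polynomial \(P\) of degree \(n\). The key observation is that, in this polynomial presentation, \(I(A)\) is a \emph{monomial} ideal: each homogeneous piece \(\Laz_p^{-j}\) is \(\Fp\)-spanned by the monomials \(\ell_\beta\) with \(|\beta|=j\) and parts in \(N_p\), so \(I(A)\) is generated by \(\{\ell_\beta : |\beta|\in A\}\), and hence \(I(A)^{s+1}\) is generated by the products \(\ell_{\gamma_1\cup\cdots\cup\gamma_{s+1}}\) with each \(|\gamma_k|\in A\). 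Since a polynomial lies in a monomial ideal iff each of its monomials does, the hypothesis \(\lc X\rc\notin I(A)^{s+1}\) provides a partition \(\beta\) with parts in \(N_p\), appearing with nonzero coefficient in \(P\) and satisfying \(\ell_\beta\notin I(A)^{s+1}\); equivalently, \(\beta\) admits no decomposition \(\beta = \gamma_1\cup\cdots\cup\gamma_{s+1}\cup \delta\) with \(|\gamma_k|\in A\). Taking each \(\gamma_k\) to be a single part of \(\beta\) lying in \(A\), this forces the sub-partition \(\beta^A\) of \(\beta\) consisting of parts in \(A\) to satisfy \(\length(\beta^A) \le s\).

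Combining Theorem~\rref{th:deg_q}, Proposition~\rref{prop:dimq_degq}, and \rref{p:deg_q_pi_q} now yields
\[
d \geq \dim_q \lc X\rc = \deg_q P \geq \deg_q X_\beta = \pi_q(\beta),
\]
so it suffices to show \(\pi_q(\beta) \geq \rho(n-(q-1)s)\), where \(\rho = \rho_q(N_p\smallsetminus A)\). Decompose \(\beta = \beta^A \cup \beta^{A'}\) with \(\beta^{A'}\) collecting parts in \(N_p\smallsetminus A\), and set \(y = |\beta^A|\). By \rref{p:pi_rho}, \(\pi_q(\beta^{A'}) \geq \rho(n-y)\); on the other hand, the elementary bound \(\lfloor \beta_i/q\rfloor \geq (\beta_i-(q-1))/q\) applied to each part, combined with \(\length(\beta^A)\le s\), yields \(\pi_q(\beta^A) \geq (y-(q-1)s)/q\). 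Using \(0 \leq \rho \leq 1/q\) (see \rref{ex:rho}), a short case distinction on whether \(y\geq (q-1)s\) (using \(\rho \leq 1/q\)) or \(y<(q-1)s\) (using \(\rho \geq 0\)) then produces the required inequality \(\pi_q(\beta) \geq \rho(n-(q-1)s)\).

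The only delicate point is the monomial-ideal observation that turns the global hypothesis on \(\lc X\rc\) into the combinatorial condition \(\length(\beta^A)\le s\) for a single witness partition \(\beta\); after that, the remainder is a routine averaging argument for \(\pi_q\), essentially the summed form of \rref{p:deg_q_var}.
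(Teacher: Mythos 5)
Your argument is correct and follows essentially the same route as the paper's proof: fix polynomial generators, use the fact that \(I(A)^{s+1}\) is a monomial ideal to extract from the hypothesis a monomial of \(P\) with at most \(s\) variables indexed by \(A\), and bound its \(\deg_q\) from below via \rref{p:deg_q_var} on the \(A\)-parts and \(\rho_q(N_p\smallsetminus A)\) on the rest, concluding with \rref{th:deg_q} and \rref{prop:dimq_degq}. If anything you are slightly more explicit than the paper, which leaves the monomial-ideal observation implicit and compresses your final case distinction into a single chain of inequalities.
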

\begin{proof}
Choose a family of polynomial generators \(\ell_i \in \Laz_p^{-i}\) for \(i\in N_p\) of the \(\Fp\)-algebra \(\Laz_p\), and write \(\lc X \rc = P(\ell_1,\dots)\) with \(P \in \Fp[X_i,i \in N_p]\).
Note that \(P\) is homogeneous of degree \(n\).
By assumption \(P\) contains as a monomial a nonzero multiple of \(X_{i_1} \cdots X_{i_t} X_\alpha\), where \(i_1,\dots,i_t \in A\) with \(t \leq s\), and \(\alpha=(\alpha_1,\dots,\alpha_m)\) is such that \(\alpha_1,\dots,\alpha_m \not \in A\).
Then
\begin{equation}
\label{eq:deg_P}
n = i_1 +\dots +i_t + \alpha_1 + \dots + \alpha_m.
\end{equation}
Let us write \(\rho = \rho_q(N_p \smallsetminus A)\).
Then
\begin{align*}
\deg_q P 
&\geq\deg_q X_{i_1} + \cdots + \deg_q X_{i_t} + \deg_q X_\alpha \\
&\geq \frac{i_1 -(q-1)}{q} +\dots +\frac{i_t - (q-1)}{q} + \deg_q X_\alpha&&\text{by \rref{p:deg_q_var}}\\
&\geq \frac{i_1 -(q-1)}{q} +\dots +\frac{i_t - (q-1)}{q} + \rho \cdot (\alpha_1+\dots+\alpha_m)&&\text{by \rref{p:pi_rho}}\\
&\geq \rho \cdot (i_1 + \dots + i_t - (q-1)t + \alpha_1+\dots+\alpha_m)&&\text{by \dref{ex:rho}{ex:rho:1}}\\
&\geq \rho \cdot (n - (q-1)s) &&\text{by \eqref{eq:deg_P}}.
\end{align*}
As \(d \geq \deg_q P\) by \rref{th:deg_q} and \rref{prop:dimq_degq}, the statement follows.
\end{proof}

\begin{example}
\label{ex:indec}
Take \(A=N_p\) and \(s=1\) in \rref{prop:A_Np}.
Then \(\rho_q(N_p \smallsetminus A)=\rho_q(\varnothing)=1/q\).
We obtain that if \(\lc X \rc \in \Laz_p\) is indecomposable (see \rref{p:indec}), then \(n < q(d +1)\).
\end{example}

\begin{example}
\label{ex:5Halves}
Take \(q=2\) and so \(G=\mu_2\).
Let \(A=\varnothing\) and \(s=0\) in \rref{prop:A_Np}.
Recall from \dref{ex:rho}{ex:rho:5} that \(\rho_2(N_2) = 2/5\).
Thus if \(\lc X \rc \neq 0 \in \Laz_2\), we obtain that \(n \leq 5d/2\).
This is the algebraic version of Boardman's Five-Halves Theorem \cite[Theorem 1]{Boardman-BAMS}, already proved in \cite[(8.1.6)]{ciequ}.
Note that in loc.\ cit.\ it was additionally assumed (and required for the proof) that the characteristic of \(k\) differs from \(2\).
\end{example}

Next, we discuss conditions on the cobordism class of \(X\) arising when the fixed locus has particularly low dimension.
\begin{corollary}
\label{cor:small_fixed}
Let \(L \subset \Laz_p\) be the \(\Fp\)-subalgebra generated by the homogeneous elements of degrees \(-1,\dots,2-q\).
For \(m \in \Nn\) denote by \(L_m \subset \Laz_p\) the ideal generated by \(L \cap \Laz_p^{-i}\) for \(i \geq m\).
If \(n \geq (2q-1)d\), then \(\lc X \rc \in L_{n-(2q-1)d}\).
\end{corollary}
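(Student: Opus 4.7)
The plan is to reduce the conclusion to the already-established inequality $\dim_q \lc X\rc \leq d$ from Theorem~\rref{th:deg_q} and Proposition~\rref{prop:dimq_degq}. First fix a family of polynomial generators $\ell_i \in \Laz_p^{-i}$ for $i \in N_p$ of the $\Fp$-algebra $\Laz_p$, and write $\lc X \rc = P(\ell_1,\dots)$ with $P \in \Fp[X_i, i \in N_p]$ homogeneous of degree $n$. Let $B = N_p \cap \{1,\dots,q-2\}$, so that $L$ is the subalgebra generated by the $\ell_i$ with $i \in B$, and let $A = N_p \smallsetminus B$. Crucially, since $q$ is a power of $p$, we have $q-1 \notin N_p$, so in fact every element of $A$ is $\geq q$.

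Given any monomial $\lambda_\beta X_\beta$ of $P$ with $\lambda_\beta \neq 0$, I would split $X_\beta = X_{i_1}\cdots X_{i_t}\cdot X_\alpha$, with $i_1,\dots,i_t \in A$ and the parts of $\alpha$ lying in $B$. Then $\ell_\alpha$ belongs to $L \cap \Laz_p^{-|\alpha|}$, and once $|\alpha| \geq n - (2q-1)d$ is established, this forces $\ell_\alpha \in L_{n-(2q-1)d}$. Because $L_{n-(2q-1)d}$ is an ideal of $\Laz_p$, the entire monomial $\ell_\beta = \ell_{i_1}\cdots \ell_{i_t}\cdot \ell_\alpha$ lies there as well, and summing over monomials yields $\lc X\rc \in L_{n-(2q-1)d}$.

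To prove $|\alpha| \geq n-(2q-1)d$, equivalently $i_1 + \cdots + i_t \leq (2q-1)d$, I would use that every monomial of $P$ has $q$-degree bounded by $\deg_q P = \dim_q \lc X\rc \leq d$. The central elementary input, made available by the fact that each $i_j \geq q$, is the estimate $\lfloor i/q \rfloor \geq i/(2q-1)$ for $i \geq q$, checked separately on each interval $[kq,(k+1)q-1]$ with $k \geq 1$. Summing over $j = 1,\dots,t$ gives
\[
d \;\geq\; \deg_q X_\beta \;\geq\; \sum_{j=1}^t \lfloor i_j/q \rfloor \;\geq\; \frac{i_1+\cdots+i_t}{2q-1},
\]
which is precisely the needed bound.

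The main obstacle is recognising that the crude estimate $\lfloor i/q\rfloor \geq (i-(q-1))/q$ employed in the proof of Proposition~\rref{prop:A_Np} is too lossy here, and must be replaced by the sharper inequality $\lfloor i/q \rfloor \geq i/(2q-1)$. The restriction that every index in $A$ is at least $q$ (rather than merely $\geq q-1$), a consequence of the arithmetic fact $q-1 \notin N_p$, is exactly what activates this sharper estimate and thus produces the constant $2q-1$ in the statement.
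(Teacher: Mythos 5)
Your proposal is correct and follows essentially the same route as the paper: decompose each monomial of \(P\) into the variables \(X_i\) with \(i\geq q\) (using \(q-1\notin N_p\)) and those with \(i\leq q-2\), bound the total weight of the large-index part by \((2q-1)d\) via \(\dim_q\lc X\rc\leq d\), and conclude using that \(L_{n-(2q-1)d}\) is an ideal. The only cosmetic difference is that you verify the inequality \(\lfloor i/q\rfloor\geq i/(2q-1)\) for \(i\geq q\) directly, where the paper invokes its estimate \(\rho_q(I)\geq 1/(2q-1)\) from \dref{ex:rho}{ex:rho:4} together with \rref{p:pi_rho}.
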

\begin{proof}
Choose a family of polynomial generators \(\ell_i \in \Laz_p^{-i}\) for \(i\in N_p\) of the \(\Fp\)-algebra \(\Laz_p\), and write \(\lc X \rc = P(\ell_1,\dots)\) with \(P \in \Fp[X_i,i\in N_p]\).
Let \(M\) be a monomial of \(P\).
Note that, as \(q=|\widehat{G}|\) is a power of \(p\), we have \(q-1\not \in N_p\).
Thus \(M\) is a nonzero multiple of \(X_\alpha X_\beta\), where \(\alpha=(\alpha_1, \dots ,\alpha_m)\) and \(\beta = (\beta_1,\dots,\beta_s)\) are partitions such that \(\alpha_i \geq q\) for all \(i=1,\dots,m\), and \(q-2 \geq \beta_j\) for all \(j=1,\dots,s\).
Then by \dref{ex:rho}{ex:rho:4}, and as \(P\) is homogeneous of degree \(n\)
\[
\deg_q P \geq \deg_q (X_\alpha X_\beta) \geq \deg_q X_\alpha \geq \frac{\deg X_\alpha}{2q-1} = \frac{n - \deg X_\beta}{2q-1}.
\]
Since \(d \geq \deg_qP\) by \rref{th:deg_q} and \rref{prop:dimq_degq}, it follows that
\[
\deg X_\beta \geq n -(2q-1)d.
\]
This implies that \(\ell_{\beta} \in L_{n -(2q-1)d}\), so that \(\ell_{\alpha} \ell_{\beta} \in L_{n -(2q-1)d}\).
Therefore \(M(\ell_1,\dots) \in L_{n -(2q-1)d}\), which implies the statement.
\end{proof}

\begin{example}
Assume that \(q=3\), so that \(G=\mu_3\).
Then \(L\) is the \(\mathbb{F}_3\)-algebra generated by \(\lc \Pp^1 \rc\).
Thus if \(n \geq 5d\) then \(\lc X \rc\) is divisible by \(\lc \Pp^1 \rc^{n-5d}\) in \(\Laz_3\).
\end{example}

\begin{example}
Assume that \(q=4\), so that \(G \in \{\mu_4,\mu_2 \times \mu_2\}\).
Then \(L\) is the \(\mathbb{F}_2\)-algebra generated by \(\lc \Pp^2 \rc\), hence \(L_m\) is the ideal generated by \(\lc \Pp^2 \rc^{\lceil \frac{m}{2} \rceil}\).
Therefore if \(n \geq 7d\) then \(\lc X \rc\) is divisible by \(\lc \Pp^2 \rc^{\lceil \frac{n-7d}{2} \rceil}\) in \(\Laz_2\).
\end{example}

\begin{example}
\label{ex:isolated}
Consider the \(\Fp\)-subalgebra \(L_0 \subset \Laz_p\), generated by the homogeneous elements of degrees \(-1,\dots,2-q\).
Assume that \(G\) acts with isolated points on \(X\), in other words that \(d \leq 0\).
In this case \rref{cor:small_fixed} asserts that \(\lc X \rc \in L_0\).
\end{example}

Let us now turn to the case \(G=\mu_2\), and assume that \(\lc X \rc \neq 0 \in \Laz_2\).
Recall from \rref{ex:5Halves} that \(n \leq (5/2)d\).
As \(5/2 < 3=2q-1\), Corollary~\rref{cor:small_fixed} applies only when \(d=0\), and simply asserts that \(\lc X \rc \in \Fd \subset \Laz_2\) (which is the content of \rref{ex:isolated} for \(q=2\)).
We have the following replacement though:
\begin{corollary}
	Assume that \(G=\mu_2\).
Denote by \(H_{2,4} \subset \Pp^2 \times \Pp^4\) the Milnor hypersurface (see e.g.\ \cite[(5.1.3)]{ciequ}).
If \(n \geq 7d/3\) then \(\lc X \rc\) is divisible by \(\lc H_{2,4} \rc^{\lceil \frac{3n - 7d}{15} \rceil}\) in \(\Laz_2\).
\end{corollary}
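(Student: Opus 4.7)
The plan is to mimic the argument of Corollary~\rref{cor:small_fixed}, now singling out the generator of \(\Laz_2\) in degree \(-5\) associated to the Milnor hypersurface \(H_{2,4}\). The key quantitative input will be the sharper bound \(\rho_2(N_2 \smallsetminus \{5\}) \geq 3/7\) from \dref{ex:rho}{ex:rho:5}, rather than the cruder \(\rho_2(N_2) \geq 2/5\) used for the Five-Halves estimate.

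First I would verify that \(\lc H_{2,4} \rc\) is indecomposable in \(\Laz_2\), equivalently that \(c_{(5)}(H_{2,4}) \neq 0 \in \Fd\); this is a standard combinatorial computation for Milnor hypersurfaces, and should be available from the forthcoming Proposition~\rref{prop:generators}. Via \rref{p:Lp_basis}, this allows me to pick a family of polynomial generators \(\ell_i \in \Laz_2^{-i}\) for \(i \in N_2\) of the \(\Fd\)-algebra \(\Laz_2\) with \(\ell_5 = \lc H_{2,4}\rc\), and to write \(\lc X \rc = P(\ell_1, \ldots)\) for some \(P \in \Fd[X_i, i \in N_2]\) homogeneous of degree \(n\). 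The required divisibility reduces to showing that \(X_5^m \mid P\), with \(m = \lceil (3n-7d)/15 \rceil\).

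For the core estimate, I would pick an arbitrary nonzero monomial \(M\) of \(P\), factor it as \(M = X_5^t \cdot X_\alpha\) with \(\alpha=(\alpha_1,\dots,\alpha_s)\) a partition whose parts lie in \(N_2 \smallsetminus \{5\}\) (so that \(|\alpha| = n - 5t\) by homogeneity), and combine \rref{p:pi_rho} with \dref{ex:rho}{ex:rho:5} to get
\[
\deg_2 M \geq \deg_2 X_\alpha \geq \rho_2(N_2 \smallsetminus \{5\}) \cdot |\alpha| \geq \tfrac{3}{7}(n - 5t).
\]
Theorem~\rref{th:deg_q} together with Proposition~\rref{prop:dimq_degq} then yields \(d \geq \dim_2 \lc X \rc = \deg_2 P \geq \deg_2 M\), whence \(5t \geq n - 7d/3\); since \(t \in \Nn\), this forces \(t \geq m\). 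Because this inequality holds for \emph{every} nonzero monomial \(M\) of \(P\), we conclude \(X_5^m \mid P\), and hence \(\lc H_{2,4}\rc^m = \ell_5^m \mid \lc X \rc\) in \(\Laz_2\).

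The only ingredient outside the machinery already set up is the indecomposability of \(\lc H_{2,4}\rc\), which is where I expect the mild obstacle to lie --- everything else is a short variation on the proof of \rref{cor:small_fixed}, with the improved constant \(3/7\) compensating for the isolation of the single degree \(5\) rather than the whole range \(\{1,\dots,q-2\}\).
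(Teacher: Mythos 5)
Your proposal is correct and follows essentially the same route as the paper: choose generators with \(\ell_5 = \lc H_{2,4}\rc\) via Proposition~\rref{prop:generators}, factor each monomial of \(P\) as \(X_5^t X_\alpha\) with the parts of \(\alpha\) in \(N_2\smallsetminus\{5\}\), and use \(\rho_2(N_2\smallsetminus\{5\})\geq 3/7\) together with Theorem~\rref{th:deg_q} and Proposition~\rref{prop:dimq_degq} to force \(t\geq\lceil(3n-7d)/15\rceil\). The indecomposability input \(c_{(5)}(H_{2,4})\neq 0\) that you flag as the only potential obstacle is indeed exactly what Proposition~\rref{prop:generators} supplies, so there is no gap.
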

\begin{proof}
	By e.g.\ \cite[(5.1.5)]{ciequ} we have \(c_{(5)}(\lc H_{2,4} \rc) = \binom{6}{2}=15\ne 0 \in \Fd\).
Therefore by \rref{p:Lp_basis}, there exists a family of polynomial generators \(\ell_i \in \Laz_2^{-i}\) for \(i\in N_2\) of the \(\Fd\)-algebra \(\Laz_2\) such that \(\ell_5 =\lc H_{2,4} \rc\).
Write \(\lc X \rc = P(\ell_2,\dots)\) with \(P \in \Fd[X_i, i\in N_2]\) homogeneous of degree \(n\).
Let \(M\) be a monomial of \(P\).
Then \(M\) is a nonzero multiple of \(X_\alpha X_5^s\), where \(s\in \Nn\) and \(\alpha=(\alpha_1, \dots ,\alpha_m)\) is a partition such that each \(\alpha_i\) belongs to \(N_2 \smallsetminus \{5\}\).
By \dref{ex:rho}{ex:rho:5} we have
\[
\deg_q P \geq \deg_q (X_\alpha X_5^s) \geq \deg_q X_\alpha \geq \frac{3 \deg X_\alpha}{7} = \frac{3(n - 5s)}{7}.
\]
Since \(d \geq \deg_qP\) by \rref{th:deg_q} and \rref{prop:dimq_degq}, we deduce that
\[
s \geq \frac{n-7d/3}{5} = \frac{3n -7d}{15}.
\]
Therefore \(M(\ell_2,\dots)\) is a multiple of \((\ell_5)^{\lceil \frac{3n - 7d}{15} \rceil}\), and the statement follows.
\end{proof}

\end{document}